\newtheorem{prevtheorem}{Theorem}
\newtheorem*{theorem*}{Theorem}
\newtheorem{theorem}{Theorem}[section]
\newtheorem{lemma}[theorem]{Lemma}
\newtheorem{proposition}[theorem]{Proposition}
\theoremstyle{definition}
\newtheorem{remark}[theorem]{Remark}
\newtheorem*{conjecture}{Conjecture}
\numberwithin{equation}{section}
\title[The Amit--Ashurst conjecture for  finite metacyclic  $p$-groups]{The Amit--Ashurst conjecture for finite\\  metacyclic  $p$-groups}
 \author[R.\,D.~Camina]{Rachel D. Camina
 }
 \address{Rachel D. Camina: 
 Fitzwilliam College, CB3 0DG Cambridge,  UK}
 \email{
 rdc26@dpmms.cam.ac.uk}
 \author[W. L.\ Cocke]{William L.\ Cocke
 }
\address{William L.\ Cocke:
Augusta University, Augusta, GA 30901, USA}
\email{
wcocke@augusta.edu}
 \author[A. Thillaisundaram]{Anitha Thillaisundaram
 }
\address{Anitha Thillaisundaram: 
Centre for Mathematical Sciences, Lund University, 223 62 Lund, Sweden}
 \email{
 anitha.thillaisundaram@math.lu.se}
 \keywords{Words, fibres of word maps, Amit--Ashurst conjecture, metacyclic $p$-groups}
 \subjclass[2010]{Primary  20F10;  Secondary 20D15}
\begin{document}
\maketitle

\begin{abstract}
The Amit conjecture about word maps on finite nilpotent groups has been shown to hold for certain classes of groups. The generalised Amit conjecture says that the probability of an element occurring in the image of a word map on a finite nilpotent group~$G$ is either~0, or at least~$1/|G|$. Noting the work of Ashurst, we name the generalised Amit conjecture the Amit--Ashurst conjecture and show that the Amit--Ashurst conjecture holds for  finite $p$-groups with a cyclic maximal subgroup.

\end{abstract}

\section{Introduction}

Recall that a word $w$ in $k$ variables is an element of some free group $\textbf{F}_k=
\langle x_1,\dots,x_k\rangle$. The word $w$ induces a word map on a finite group~$G$, which by abuse of notation we write as 
\[
w:G^k\rightarrow G,
\] 
where $G^k$ denotes the direct product of $k$ copies of~$G$. This map induces a probability distribution $P_{w,G}$ on $G$, where for $g\in G$,
\[
P_{w,G}(g) = \frac{\big|\{(g_1,\dots,g_k)\in G^k \mid w(g_1,\ldots,g_k)=g \}\big|}{|G|^k}.
\]
We also write
\[
N_{w,G}(g)=\big|\{(g_1,\dots,g_k)\in G^k \mid w(g_1,\ldots,g_k)=g\}\big|,
\]
or simply $N_w(g)$ when the group is clear.

 We denote by $G_w$ the set of word values of $w$ in~$G$, i.e. the set of elements $g\in G$ such that the equation $w=g$ has a solution in $G^k$. Additionally, the \textit{fibre} of $g$ in $G^k$ is 
\[
\{(g_1,\ldots, g_k)\in G^k\mid w(g_1,\ldots, g_k)=g\}.
\]

Nikolov and Segal~\cite[Thm.~1.3]{NikolovSegal}
gave a characterisation of finite nilpotent groups (and of finite solvable
groups) based on the probability function~$P_{w,G}$: 
A finite group~$G$ is nilpotent if and only if the values $P_{w,G}(g)$ are bounded away from zero as $g$ ranges over~$G_w$ and $w$ ranges over all group words. The bound is quite small; of the order $p^{-|G|}$ where $p$ is the largest prime divisor of~$|G|$.

This bound has been improved in some cases. For example, it is easy to show that for an abelian group $G$, a word $w$ and $g$  in $G_w$, then $P_{w,G}(g)\ge |G|^{-1}$, since any word map is a homomorphism over an abelian group. Additionally,    Ashurst \cite[Ch.~4]{ashurst} showed that the nilpotent dihedral groups and the generalised quaternion groups also satisfy this property. In this direction, Ashurst conjectured the following in her thesis \cite[Conj.~6.2.1]{ashurst}, which we call the Amit--Ashurst conjecture:

\begin{conjecture}[Amit--Ashurst]
Let $G$ be a finite nilpotent group. Then 
\[
P_{w,G}(g) \ge |G|^{-1},
\]
where $w$ ranges over all words and $g\in G_w$.
\end{conjecture}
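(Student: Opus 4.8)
The plan is to reduce the general nilpotent case to finite $p$-groups and then attack the $p$-group case by induction on the order through a central quotient. For the reduction, recall that a finite nilpotent group is the direct product $G = P_1 \times \cdots \times P_r$ of its Sylow subgroups. Since word evaluation in a direct product is computed coordinatewise, for $g = (g^{(1)},\ldots,g^{(r)})$ one has $N_{w,G}(g) = \prod_j N_{w,P_j}(g^{(j)})$, hence $P_{w,G}(g) = \prod_j P_{w,P_j}(g^{(j)})$, and moreover $g \in G_w$ if and only if $g^{(j)} \in (P_j)_w$ for every $j$. Thus, if the bound $P_{w,P_j}(g^{(j)}) \ge |P_j|^{-1}$ holds for each Sylow subgroup, multiplying gives $P_{w,G}(g) \ge \prod_j |P_j|^{-1} = |G|^{-1}$. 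This reduces the conjecture to the case of a finite $p$-group, which I now treat.

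For a $p$-group I argue by induction on $|G|$, the cases $|G| \le p$ being covered by the abelian argument recalled in the introduction. Fix a word $w \in \mathbf{F}_k$, an element $g \in G_w$, and a central subgroup $Z \le Z(G)$ of order $p$; write $\bar{G} = G/Z$ and let $z$ generate $Z$. Because $N_{w,G}$ is unchanged when $w$ is replaced by its image under an automorphism of $\mathbf{F}_k$ (such an automorphism merely permutes $G^k$ bijectively), a Nielsen normalisation lets me assume the exponent-sum vector $(e_1(w),\ldots,e_k(w))$ equals $(d,0,\ldots,0)$ with $d = \gcd_i e_i(w) \ge 0$. The key computation is that, $Z$ being central, replacing $(h_1,\ldots,h_k)$ by $(h_1 z^{a_1},\ldots,h_k z^{a_k})$ multiplies $w(h_1,\ldots,h_k)$ by $z^{\sum_i a_i e_i(w)}$. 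Hence over each fibre of the projection $G^k \to \bar{G}^k$ lying above a solution of $\bar{w} = \bar{g}$, the $w$-value sweeps the coset $gZ$ through the linear form $(a_i) \mapsto \sum_i a_i e_i(w) \bmod p$.

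If $p \nmid d$ this form is surjective onto $\mathbb{Z}/p$, so exactly $p^{k-1}$ of the $p^k$ lifts of each downstairs solution hit $g$ precisely, whence $N_{w,G}(g) = p^{k-1}\, N_{\bar{w},\bar{G}}(\bar{g})$. Since $g \in G_w$ forces $\bar{g} \in \bar{G}_{\bar{w}}$, the induction hypothesis gives $N_{\bar{w},\bar{G}}(\bar{g}) \ge |\bar{G}|^{k-1} = (|G|/p)^{k-1}$, so $N_{w,G}(g) \ge |G|^{k-1}$, i.e. $P_{w,G}(g) \ge |G|^{-1}$, completing the inductive step in this case.

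The main obstacle is the complementary case $p \mid d$, equivalently $w \in \mathbf{F}_k^{\,p}[\mathbf{F}_k,\mathbf{F}_k]$ (in particular every commutator-type word, where $d = 0$). Here the linear form is identically zero, so the $w$-value is constant on each central-coset fibre: every downstairs solution lifts to either all $p^k$ or none of its preimages being solutions of $w = g$. The total count $p^k\, N_{\bar{w},\bar{G}}(\bar{g})$ is then distributed among the $p$ elements of $gZ$, but possibly very unevenly, so one cannot conclude that $g$ receives at least the average share $p^{k-1}\, N_{\bar{w},\bar{G}}(\bar{g})$; indeed $g$ might receive nothing. Controlling this distribution requires understanding how $w$ varies across $gZ$ as the lift varies, a genuinely structural question about $G$ and $w$, and it is exactly here that extra hypotheses on $G$ (such as a cyclic maximal subgroup, or a restriction on the nilpotency class) enter and where the conjecture remains open in general. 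A natural line of attack is to choose $Z$ inside a suitable term of the lower central or Frattini series so that $w$ lands in a verbal subgroup with controlled behaviour, and to run a secondary induction there; making this precise for arbitrary finite $p$-groups is the crux.
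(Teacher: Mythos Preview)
Your reduction to finite $p$-groups via the Sylow decomposition is correct and standard, and your inductive step in the case $p \nmid d$ is also correct: after Nielsen normalisation this is precisely the case where $w$ is surjective on the $p$-group~$G$, and the bound then follows (the paper appeals to \cite[Lem.~7]{CockeHo2} for exactly this, which amounts to the computation you carry out with the central fibre).

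However, the proposal does not prove the statement, and you say so yourself: in the case $p \mid d$ --- equivalently $G_w \subsetneq G$ --- your central-coset count gives no control over which element of $gZ$ receives the mass, and you explicitly leave this open. That is not a slip in your reasoning but an accurate identification of the obstruction; the point is that the Amit--Ashurst conjecture is \emph{open} in general, and the paper makes no claim to prove it. The paper's contribution (Theorem~\ref{thm:main}) establishes the conjecture only for finite $p$-groups with a cyclic maximal subgroup, and the substantive work (Lemma~\ref{lem:intersection-in-Z} together with the polynomial/Chevalley--Warning argument culminating in Proposition~\ref{pr:polynomial}) is devoted precisely to the non-surjective case for that restricted class. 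So what you have written is not a proof of the stated conjecture; it is a correct account of the easy reductions plus an honest acknowledgement that the essential case remains unresolved.
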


The Amit--Ashurst conjecture is a generalisation of the Amit conjecture which asks whether for a finite nilpotent group $G$, the probability distribution induced by the word $w$ satisfies $P_{w,G}(1)\geq |G|^{-1}$ for all $w$.  We note that for all groups $G$, and all $g \in G$, the word $w = x$ satisfies $P_{w,G}(g)=|G|^{-1}.$ Hence, the bound proposed in the Amit--Ashurst conjecture is strict. The Amit--Ashurst conjecture first appeared as the generalised Amit conjecture in~\cite{CIT}. However, since Ashurst first asked the question, we believe the term Amit--Ashurst conjecture is more appropriate. 

Not much progress has been made on the Amit conjecture, or the Amit--Ashurst conjecture. Since a finite nilpotent group is a direct product of its Sylow
subgroups, we note that it suffices to consider finite $p$-groups for either of these conjectures. It is known that class 2 groups satisfy the Amit conjecture  \cite{ainhoa,levy} as do split metacyclic groups   \cite[Prop.~1.3]{levy}. However, besides the examples given by Ashurst, there were no other families of groups known to satisfy the Amit--Ashurst conjecture. In this paper we show that the Amit--Ashurst conjecture holds for finite $p$-groups with a cyclic maximal subgroup. Formally our theorem is:

\begin{prevtheorem}\label{thm:main}
For $p$ a prime, let $G$ be a finite $p$-group with a cyclic maximal subgroup. Then for any word $w$ and $g\in G_w$ we have that $P_{w,G}(g) \geq |G|^{-1}$. 
\end{prevtheorem}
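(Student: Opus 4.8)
The plan is to exploit the complete classification of finite $p$-groups with a cyclic maximal subgroup. These groups are well understood: for odd $p$ the only such groups (other than the cyclic and obvious abelian ones) are the split metacyclic groups $C_{p^n}\rtimes C_p$, while for $p=2$ one has in addition the dihedral, generalised quaternion, and semidihedral groups of order $2^n$. Since every such group is metacyclic with a cyclic normal subgroup of index $p$, I would first set up a uniform presentation $G=\langle a,b\mid a^{p^{n-1}}=1,\ b^p=a^{t},\ b^{-1}ab=a^{r}\rangle$ covering all cases, and then reduce the verification of the bound $P_{w,G}(g)\ge |G|^{-1}$ to understanding the fibre sizes $N_{w,G}(g)$.

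The key structural step is to analyse the word map via the derived quotient. Because $G$ has a cyclic maximal subgroup, its commutator structure is extremely rigid: $G'$ is cyclic and $G/G'$ has at most two cyclic factors, so the abelianisation is small and explicit. For a word $w$ in $k$ variables, I would factor the word map through the images in $G/G'$: the value of $w$ modulo $G'$ depends only on the exponent sum vector of $w$, and is governed by the abelian (hence homomorphic) behaviour already noted in the excerpt for abelian groups. This lets me count, for a fixed target coset, how many tuples $(g_1,\dots,g_k)$ map into it, and then within each such coset I must control how $w$ distributes over the elements of the fibre. The main technical device will be to pin down, for each generator assignment, the dependence of $w(g_1,\dots,g_k)$ on the "cyclic part" of each $g_i$, using the conjugation relation $b^{-1}ab=a^r$ to collect all powers of $a$ together and reduce $w$ to a normal form $a^{\alpha}b^{\beta}$ with $\alpha,\beta$ explicit polynomial (in fact linear-plus-correction) functions of the exponents of the inputs.

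Having obtained such a normal form, the counting becomes a question about solutions of a system of congruences modulo powers of $p$. For a given target $g=a^{\alpha_0}b^{\beta_0}\in G_w$, I would count the inputs yielding $\beta\equiv\beta_0$ first — this is a linear condition on the $b$-exponents modulo $p$ and contributes a clean power of $p$ — and then, conditioning on that, count the inputs yielding $\alpha\equiv\alpha_0$ modulo $p^{n-1}$, where the coefficients involve the multiplier $r$. The crucial observation is that whenever $g$ is a genuine word value (so the system is consistent), the number of solutions is at least $|G|^{k-1}$, which is exactly the bound $N_{w,G}(g)\ge|G|^{k-1}$ equivalent to $P_{w,G}(g)\ge|G|^{-1}$. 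I expect this solution count to follow from a Smith-normal-form or elementary-divisor argument on the integer matrix of exponent coefficients, using that the relevant determinants are controlled by the index $|G:G'|$.

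The hardest part, I anticipate, will be the $p=2$ non-split cases — the generalised quaternion and especially the semidihedral groups — where the relation $b^2=a^{2^{n-2}}$ introduces a genuine nontrivial contribution to the $a$-part from the $b$-exponents, so the $\alpha$ and $\beta$ computations no longer decouple cleanly. There the collection process picks up quadratic correction terms (binomial-coefficient contributions from expanding powers of $b$), and one must show these corrections do not reduce the fibre size below $|G|^{k-1}$. I would handle these cases by a careful induction on the word length or on $n$, treating the multiplier $r\equiv -1$ or $r\equiv 2^{n-2}-1$ separately, and verifying the critical small cases directly. The abelian and split metacyclic cases, by contrast, should be essentially immediate from the factorisation through $G/G'$ together with the results already cited for split metacyclic groups in the introduction.
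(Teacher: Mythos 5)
Your reduction to fibre-counting founders precisely in the hard case, and the tool you propose for it --- a Smith-normal-form/elementary-divisor argument on the integer matrix of exponent coefficients --- cannot work there, because the relevant system is not linear. The paper organises the proof around a trichotomy: after Hall collection $w=x_1^{e_1}\cdots x_k^{e_k}\kappa$ with $\kappa$ a product of commutators, either $G_w=G$ (Lemma~\ref{lem:dichotomy}; this surjective case is delegated to a result of Cocke--Ho, not to the factorisation through $G/G'$), or $Z\subsetneq G_w\subseteq A$ (Lemma~\ref{lem:intersection-in-Z}, which is essentially your fixed-$\boldsymbol{\beta}$ conditioning: for fixed $b$-parts the word restricts to a homomorphism on $A^k$, so fibres are constant on $Z$-cosets and the bound descends by induction from $G/Z$ via Lemma~\ref{lem:induction}), or $1\ne G_w\subseteq Z$. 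In this last case all $e_i$ are divisible by $p^{n-1}$, and the $a$-exponent of $w(a^{\alpha_1}b^{\beta_1},\ldots,a^{\alpha_k}b^{\beta_k})$ is a genuine polynomial of degree up to the nilpotency class $c=n$ in the input exponents: for the dihedral and generalised quaternion groups the basic commutators contribute $q_\ell(\boldsymbol{\alpha},\boldsymbol{\beta})=2^{\ell-1}\bar\beta_\ell\cdots\bar\beta_3(\alpha_1\bar\beta_2-\alpha_2\bar\beta_1)$ of degree $\ell$ --- not the merely ``quadratic correction terms'' you anticipate. Consistency of the system then guarantees only \emph{one} solution, giving a fibre bound of order $p^{k(n-1)+k(m-1)}$, which falls short of the required $|G|^{k-1}$ by an unbounded factor; no elementary-divisor computation can bound from below the number of solutions of a degree-$n$ polynomial congruence. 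The paper closes exactly this gap with the Chevalley--Warning theorem (Theorem~\ref{thm:Chevalley-Warning}): Proposition~\ref{pr:polynomial} --- the technical heart --- shows that the hypothesis $G_w\subseteq Z$ forces the integer coefficients of the commutator contributions to vanish modulo $2^{n-1}$, so that the value map into $Z$ is given by a polynomial $q$ over $\mathbb{F}_p$ of degree at most $c$; one then pads $w$ with dummy variables so that $2k>c$, and Chevalley--Warning applied to $q-i$ yields at least $p^{2k-c}$ solutions modulo $p$, each lifting freely to $p^{k(n-1)}\cdot p^{k(m-1)}$ tuples, whence $P_{w,G}(g)\ge p^{-c}>|G|^{-1}$. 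Your proposal contains neither this theorem nor any substitute for it; ``induction on word length or on $n$ plus small cases'' does not produce the needed lower bound.

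A secondary but genuine error: you claim the split cases (all odd $p$, and the type with $r=2^{n-1}+1$) are ``essentially immediate'' from the split-metacyclic results cited in the introduction. Those results (Levy) concern the \emph{Amit} conjecture only, i.e.\ the fibre of the identity, whereas the Amit--Ashurst conjecture demands the bound for every $g\in G_w$; the paper must still run its full machinery (Lemmas~\ref{lem:dichotomy} and~\ref{lem:intersection-in-Z} together with Lemma~\ref{lem:class-2} and Proposition~\ref{pr:polynomial}) in precisely these cases, in Theorem~\ref{thm:class-2}. Relatedly, your factorisation through $G/G'$ controls only the coset of $G'$ hit by $w$, which is the easy linear part; the entire difficulty is the distribution within that coset, governed by $\kappa$, and that is where your argument stops short.
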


Our proof of Theorem \ref{thm:main} is broken into two parts. First we show that word maps with large images have well-behaved probability distributions on a larger family of finite metacyclic $p$-groups. Then, recalling the classification of finite $p$-groups with a cyclic maximal subgroup, we show that word maps with minimal nontrivial image have appropriately behaved probability distributions. 

\medskip

\noindent
\textit{Notation.}
Throughout, we  use left-normed commutators, for example, $[x,y,z] = [[x,y],z]$. Furthermore we define $[x,y]=xyx^{-1}y^{-1}$.

\subsection*{Acknowledgements} We thank Ainhoa I{\~n}iguez for her initial involvement in the project, and we thank the referee for their helpful comments. This collaboration was initiated at a Functor Categories for Groups meeting, which was supported by a London Mathematical Society Joint Research Group grant. This version of the article has been accepted for publication, after peer review but is not the Version of Record and does not reflect post-acceptance improvements, or any corrections. The Version of Record is available online at: http://dx.doi.org/10.1007/s40879-023-00641-0

\subsection*{Statements and Declarations} The views expressed are those of the authors and do not reflect the official policy or position of the \textsc{ARCYBER}, the Department of the Army, the Department of Defense, or the US Government.

\section{Word Maps over  Metacyclic Groups}

Our basic set up is $G = AB$ is a finite $p$-group, where $A \trianglelefteq G$ with $A$ and $B$
both cyclic. We write $Z$ for the unique subgroup of order~$p$ in~$A$. As
suggested by the notation, the subgroup~$Z$ is central.

As often is the case for nilpotent groups, we would like to prove  the Amit--Ashurst conjecture by taking the  quotient by~$Z$ 
(or any other normal subgroup) and looking at~$G/Z$. Assuming by induction that $P_{w,G/Z}(gZ)$ is at least $|G/Z|^{-1}$, then one would hope that the probability distributes over the coset $gZ$ nicely. This does not work in general, but is an illustrative technique.

First we document a general instance where this idea works.

\begin{lemma}\label{lem:induction} 
Let $G$ be a group and $N$ a normal subgroup such that $G/N$
satisfies the Amit--Ashurst conjecture. Suppose $w$ is a word in $k$ variables and $g \in G_w$ is such that $N_w(gn) = N_w(g)$ for all $n \in N$. Then $N_w(g) \geq |G|^{k-1}$.
\end{lemma}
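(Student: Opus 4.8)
The plan is to relate the fibre counts over $G$ to those over the quotient $G/N$ by an elementary counting argument, and then feed in the hypothesis that $G/N$ satisfies the conjecture. First I would record the dictionary between the two formulations of the statement: since $P_{w,G}(g) = N_w(g)/|G|^k$, the desired conclusion $N_w(g) \geq |G|^{k-1}$ is exactly the assertion $P_{w,G}(g) \geq |G|^{-1}$. In particular, the assumption that $G/N$ satisfies the Amit--Ashurst conjecture translates, for any word value $\bar g \in (G/N)_{\bar w}$, into the lower bound $N_{\bar w}(\bar g) \geq |G/N|^{k-1}$, where $\bar w$ denotes the word map induced by $w$ on $G/N$.

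The key step is to establish the counting identity
\[
\sum_{n \in N} N_w(gn) = |N|^k \cdot N_{\bar w}(gN).
\]
I would prove this by partitioning $G^k$ via the natural projection $\pi \colon G^k \to (G/N)^k$. This map has fibres of size $|N|^k$, and a tuple $(g_1,\dots,g_k)$ satisfies $w(g_1,\dots,g_k) \in gN$ precisely when its image under $\pi$ satisfies $\bar w = gN$. Counting the set $\{(g_1,\dots,g_k) \in G^k \mid w(g_1,\dots,g_k) \in gN\}$ in two ways, once as a disjoint union of $|N|^k$-element fibres lying over the $N_{\bar w}(gN)$ solutions in the quotient, and once as $\sum_{n\in N} N_w(gn)$ according to the unique $n \in N$ with $w(g_1,\dots,g_k) = gn$, yields the identity.

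With the identity in hand, the hypothesis $N_w(gn) = N_w(g)$ for all $n \in N$ collapses the left-hand sum to $|N| \cdot N_w(g)$, giving
\[
N_w(g) = |N|^{k-1} \cdot N_{\bar w}(gN).
\]
Finally, since $g \in G_w$, any solution of $w = g$ projects under $\pi$ to a solution of $\bar w = gN$, so $gN \in (G/N)_{\bar w}$ and the conjecture for $G/N$ applies, giving $N_{\bar w}(gN) \geq |G/N|^{k-1} = (|G|/|N|)^{k-1}$. Substituting yields
\[
N_w(g) \geq |N|^{k-1}\left(\frac{|G|}{|N|}\right)^{k-1} = |G|^{k-1},
\]
as required.

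I expect the only real obstacle to be setting up the two-way count cleanly, in particular being careful with the factors of $|N|$ contributed by the fibres of $\pi$ in each coordinate, and verifying that the hypothesis on $G/N$ legitimately applies by checking that $gN$ genuinely lies in the image of $\bar w$. Everything else is bookkeeping, and the crucial structural input, namely the equidistribution hypothesis $N_w(gn) = N_w(g)$, enters precisely at the step where the sum over $N$ collapses.
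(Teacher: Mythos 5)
Your proof is correct and follows essentially the same route as the paper: the identity $\sum_{n\in N} N_w(gn) = |N|^k \cdot N_{\bar w}(gN)$ is exactly the counting argument the paper invokes (citing Ashurst), and the collapse of the sum via $N_w(gn)=N_w(g)$ followed by the bound $N_{\bar w}(gN)\ge |G/N|^{k-1}$ matches the paper's computation step for step. Your explicit two-way count over the projection $\pi\colon G^k \to (G/N)^k$ merely fills in the detail the paper leaves to a reference, so there is nothing to change.
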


\begin{proof} Let $w$ be a word in $k$
variables. Write $\overline{G} = G/N$ and we will use the bar notation for elements in~$\overline{G}$. A simple
counting argument, see \cite[page 20]{ashurst}, shows that
$$
N_{w,G/N}(\bar{g}) = \sum_{n \in N} N_{w,G}(gn)\cdot  |N|^{-k}.$$ By
assumption $N_{w, G/N}(\bar{g}) \geq |G/N|^{k-1}$. Thus
$$
N_{w,G}(g)\cdot |N|^{-(k-1)} \geq |G/N|^{k-1}$$ and the result follows.
\end{proof}

\begin{lemma}\label{lem:dichotomy} Let $G$ be a finite $p$-group, such that $G = AB$, where $A\trianglelefteq G$ and both
$A$ and $B$ are cyclic. Let $w$ be a word. If $b^p\in A$ for all $b\in B$, then one of the following holds:
\begin{enumerate}
    \item $G_w = G$,
    \item $G_w\subseteq A$.
\end{enumerate}
\end{lemma}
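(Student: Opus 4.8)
The plan is to project onto $G/A$ and let the abelian word map there dictate the outcome. I would begin by using the hypothesis to pin down $G/A$: since $G = AB$ with $A \trianglelefteq G$, the quotient $G/A \cong B/(A \cap B)$ is cyclic, and the assumption $b^p \in A$ for all $b \in B$ gives $B^p \subseteq A \cap B$, so $G/A$ has exponent dividing $p$. Hence $G/A$ is cyclic of order $1$ or $p$. If $|G/A| = 1$ then $G = A$ and (2) holds trivially, so I may assume $G/A$ has order exactly $p$ and write $\pi \colon G \to G/A$ for the projection.

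Now the key observation is that, because $G/A$ is abelian, the induced word map $\bar w$ on $G/A$ is a homomorphism (as noted for abelian groups in the introduction), so its image is a subgroup of the cyclic group $G/A$ of order $p$, and hence is either trivial or all of $G/A$. These two possibilities correspond exactly to the two alternatives in the statement. Concretely, letting $e_i$ denote the exponent sum of $x_i$ in $w$, over $G/A$ one has $\bar w = \prod_i \bar x_i^{\,e_i}$, so the image of $\bar w$ is trivial precisely when every $e_i \equiv 0 \pmod p$.

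If the image of $\bar w$ is trivial, then $\pi(w(g_1,\dots,g_k)) = 1$ for every tuple, so all values of $w$ lie in $A = \ker \pi$ and $G_w \subseteq A$, giving (2). If instead the image is all of $G/A$, then some $e_i \not\equiv 0 \pmod p$, and I would show $G_w = G$ directly: substituting the identity for every variable except $x_i$ collapses $w$ to $x_i^{e_i}$, so $G_w$ contains every $e_i$-th power in $G$. Since $G$ is a $p$-group and $\gcd(e_i, p) = 1$, the power map $g \mapsto g^{e_i}$ is surjective (given $h$ of order $n$, a power of $p$, choose $f$ with $e_i f \equiv 1 \pmod n$, so that $(h^f)^{e_i} = h$). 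Thus $G_w = G$, which is (1).

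The argument is short, and its only delicate points are the surjectivity of the power map in the second case and the reduction in the first paragraph pinning down $|G/A| \le p$ from the hypothesis. It is worth emphasising that the case $G_w = G$ needs nothing beyond $G$ being a finite $p$-group together with an exponent sum prime to $p$; the hypothesis $b^p \in A$ enters only to force $G/A$ to have exponent $p$, which is exactly what makes the behaviour of $w$ modulo $A$ an all-or-nothing matter controlled by the residues $e_i \bmod p$.
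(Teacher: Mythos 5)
Your proof is correct and follows essentially the same route as the paper: the paper uses Hall's collection process to write $w = x_1^{e_1}\cdots x_k^{e_k}\kappa$ with $\kappa$ a product of commutators landing in $A$ (since $G' \le A$), and then splits on whether $p$ divides $\gcd(e_1,\dots,e_k)$ --- exactly your dichotomy, phrased inside $G$ rather than in the quotient $G/A$. Your write-up additionally spells out the surjectivity step (substituting the identity into all but one variable and inverting the $e_i$-power map on a $p$-group), which the paper asserts without detail.
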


\begin{proof}
By Hall's collection process \cite[Sec. 11.1]{Hall}, note that $w = x_1^{e_1}\cdots x_\ell^{e_\ell} \kappa$ for some $\ell\in\mathbb{N}\cup\{0\}$, $e_1,\ldots,e_\ell\in\mathbb{Z}$ and word~$\kappa$ that is a product of commutators. Since $G/A$ is cyclic, we see that
$G' \le A$. Hence the image of~$\kappa$ is contained in~$A$. Let $d=\gcd{(e_1,\dots,e_\ell)}$. Since $b^p \in A$, if $p\mid d$ then the image is entirely contained in $A$; otherwise, we have $(p,d)= 1$ and $w$ is a surjective map.
\end{proof}

In particular, if $G_w = G$ then the Amit--Ashurst conjecture follows from \cite[Lem.~7]{CockeHo2}. Therefore, for the rest of this section we consider the case when $G_w \subset G$.

Let $G = AB$ be a finite $p$-group, where $A \trianglelefteq G$ with $A$ and $B$
both cyclic. For ${\bf{b}} = (b_1, \ldots , b_k)\in B^k$, we consider the coset $\mathbf{b}A^k$ in~$G^k$.
Recall that $A$ is normal in~$G$ and as $A=\langle a\rangle$ is cyclic, the cyclic subgroup $B=\langle b\rangle$ acts  
on~$A$ by powers, i.e. $bab^{-1}=a^\ell$ for some $\ell\in\mathbb{Z}$. Hence
over the coset~$\mathbf{b}A^k$, 
a
word~$w$ acts as
\begin{equation}\label{eq:splitting}
w(a_1b_1, \ldots , a_kb_k) = v_{w,{\bf{b}}}(a_1, \ldots , a_k)w({\bf{b}}) = w_{{\bf{b}}}(a_1, \ldots , a_k),
\end{equation}
where $v_{w,{\bf{b}}}$ is a word map over $A$ and $w_{{\bf{b}}}$ is $v_{w,{\bf{b}}}$ times $b^i$ for some~$i$.

\begin{lemma} \label{lem:intersection-in-Z}
Let $G$ be a finite $p$-group, such that $G = AB$, where $A\trianglelefteq G$ and both
$A$ and $B$ are cyclic. Let $Z$ be the unique subgroup of order $p$ in $A$. Suppose
that $A\cap B \le Z$
and that $G/Z$ satisfies the Amit--Ashurst conjecture. Then any word $w$ such that
\[
G_w \not\subseteq Z\quad  \text{and}\quad G_w \subseteq A,
\]
 has $Z \subsetneq G_w$ and satisfies $P_{w,G}(g) \ge |G|^{-1}$ for all $g \in G_w$.
\end{lemma}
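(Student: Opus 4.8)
The plan is to apply Lemma~\ref{lem:induction} with $N = Z$, so the crux is to establish $N_w(gz) = N_w(g)$ for the relevant $g$ and all $z \in Z$. I would first exploit the coset decomposition~\eqref{eq:splitting}: fix $\mathbf{b} \in B^k$ and study the restriction $w_{\mathbf{b}}(\mathbf{a}) = v_{w,\mathbf{b}}(\mathbf{a})\,w(\mathbf{b})$ on $\mathbf{b}A^k$. Since $A$ is abelian, $v_{w,\mathbf{b}}$ is a homomorphism $A^k \to A$, so its image $H_{\mathbf{b}}$ is a subgroup of the cyclic $p$-group $A$ and hence is either trivial or contains the unique minimal subgroup $Z$; moreover all of its fibres share the common size $|A|^k/|H_{\mathbf{b}}|$. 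The second key observation is that $w(\mathbf{b})$, being a word value that lies in $B$ and (by the hypothesis $G_w\subseteq A$) also in $A$, satisfies $w(\mathbf{b}) \in A\cap B \le Z$. Consequently the image of the coset $\mathbf{b}A^k$ under $w$ is $w(\mathbf{b})H_{\mathbf{b}}$, which equals the point $\{w(\mathbf{b})\}\subseteq Z$ when $H_{\mathbf{b}}=1$, and equals $H_{\mathbf{b}}$ (because $w(\mathbf{b})\in Z\le H_{\mathbf{b}}$) when $Z \le H_{\mathbf{b}}$.

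Summing over cosets, $N_w(g)$ is, up to the global factor $|A\cap B|^{-k}$ coming from writing elements of $G=AB$ as products, the sum over $\mathbf{b} \in B^k$ of the fibre counts $\widetilde N_{\mathbf{b}}(g) := |\{\mathbf{a}\in A^k : w_{\mathbf{b}}(\mathbf{a})=g\}|$. I would then split the argument at $Z$. For $g \in G_w \setminus Z$, the trivial-image cosets contribute nothing to either $\widetilde N_{\mathbf{b}}(g)$ or $\widetilde N_{\mathbf{b}}(gz)$ (their only value lies in $Z$, while $g, gz \notin Z$), whereas each coset with $Z \le H_{\mathbf{b}}$ contributes equally to both, since $g \in H_{\mathbf{b}} \Leftrightarrow gz \in H_{\mathbf{b}}$. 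Hence $N_w(gz) = N_w(g)$, and Lemma~\ref{lem:induction} gives $P_{w,G}(g) \ge |G|^{-1}$ for every $g \in G_w \setminus Z$, a nonempty set as $G_w \not\subseteq Z$.

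It remains to locate $Z$ inside $G_w$ and to bound $P_{w,G}$ on $Z$. Picking $g_0 \in G_w \setminus Z$, any coset with $\widetilde N_{\mathbf{b}}(g_0)>0$ must have $Z \le H_{\mathbf{b}^*}$, since a trivial-image coset cannot reach outside $Z$; its image is then exactly $H_{\mathbf{b}^*} \supseteq Z$, and it contains $g_0 \notin Z$, giving $Z \subsetneq H_{\mathbf{b}^*} \subseteq G_w$ and hence $Z \subsetneq G_w$. Finally, for $g \in Z$ every coset with $Z \le H_{\mathbf{b}}$ contributes the full $|A|^k/|H_{\mathbf{b}}|$ (as $g \in Z \le H_{\mathbf{b}}$), so $N_w(g)$ is at least the constant $C := |A\cap B|^{-k}\sum_{\mathbf{b}\,:\,Z \le H_{\mathbf{b}}} |A|^k/|H_{\mathbf{b}}|$; for the same reason $N_w(g_0) \le C$. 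Therefore $N_w(g) \ge C \ge N_w(g_0) \ge |G|^{k-1}$, and $P_{w,G}(g) \ge |G|^{-1}$ holds on $Z$ as well.

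I expect the main obstacle to be the bookkeeping at $Z$ itself: Lemma~\ref{lem:induction} does not apply there, because the point masses contributed by the trivial-image cosets can distribute unevenly across $Z$, so the invariance $N_w(gz)=N_w(g)$ can genuinely fail. The resolution is the domination $N_w(g) \ge C \ge N_w(g_0)$, which bypasses Lemma~\ref{lem:induction} on $Z$ by controlling the worst central fibre through the already-bounded fibre over $g_0$. A secondary point requiring care is the passage between summing over $B^k$ and summing over honest cosets of $A^k$ in $G^k$ when $A \cap B = Z$, which contributes only the harmless uniform factor $|A\cap B|^{-k}$.
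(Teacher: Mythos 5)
Your proposal is correct and follows essentially the same route as the paper's proof: the coset splitting~\eqref{eq:splitting}, the key observation $w(\mathbf{b})\in A\cap B\le Z$, the dichotomy that the image of the homomorphism $v_{w,\mathbf{b}}$ is trivial or contains $Z$ with all fibres of equal size, the invariance $N_w(gz)=N_w(g)$ for $g\in G_w\setminus Z$ feeding Lemma~\ref{lem:induction}, and the domination of $N_w(g)$ by $N_w(z)$ for $z\in Z$. Your explicit $|A\cap B|^{-k}$ normalisation and your direct derivation of $Z\subseteq G_w$ from a coset image $H_{\mathbf{b}^*}$ are only minor bookkeeping variants of the paper's per-coset inequality $N_{v_{w,\mathbf{b}}}(g)\le N_{v_{w,\mathbf{b}}}(z)$ and its conclusion $Z\subseteq G_w$ from $N_w(z)\ge N_w(g)>0$.
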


\begin{proof}
 For ${\bf{b}} = (b_1, \ldots , b_k)\in B^k$, we have from~\eqref{eq:splitting} that
\[
w(a_1b_1,  \ldots , a_kb_k) = v_{w,\bf{b}}(a_1, \ldots , a_k)w({\bf{b}}).
\]
As $G_w\subseteq A$, we observe that $w(\mathbf{b})\in Z$. Also, as $A$ is abelian $v_{w,\bf{b}} : A^k \rightarrow A$ is a homomorphism
with image, $A_{v_{w,\bf{b}}}$, a subgroup of $A$ and all fibres of $v_{w,\bf{b}}$ are of the same size. Note $A_{v_{w,\bf{b}}}$ is either trivial or contains $Z$, in the second case  $A_{v_{w,\bf{b}}}$ is a union of cosets of $Z$. Thus for any $g \in G_w \backslash Z$ and any fixed ${\bf{b}}$ it follows that 
$N_{ v_{w,\bf{b}}}(g) =  N_{ v_{w,\bf{b}}}(gz) \leq   N_{ v_{w,\bf{b}}}(z)$ for all $z \in Z$, with the inequality coming from the case when
$g \not\in  A_{v_{w,\bf{b}}}w({\bf{b}})$ but $z \in  A_{v_{w,\bf{b}}}w({\bf{b}})$. 
Summing over the $\bf{b}$, gives $N_w(g) = N_w(gz) \leq N_w(z)$. 

We can now proceed by induction.
Let $\overline{G} = G/Z$ (note $Z$ is central in $G$). Suppose 
$g \in G_w \backslash Z$. By hypothesis $N_w(\bar{g}) \geq |\overline{G}|^{k-1}$.
By the previous paragraph $N_w(g) = N_w(gz)$ for all $z \in Z$ and by Lemma~\ref{lem:induction} it follows that 
$N_{w}(g) \geq |G|^{k-1}$. Furthermore, as $N_w(z) \geq N_w(g)$ for all $z \in Z$, both the results follow.
\end{proof}

\section{The Polynomial Argument}
The previous section was largely inspired by Levy~\cite{levy}. The impetus for this section originates in Ashurst's dissertation and together with Lemma \ref{lem:intersection-in-Z} can be used to prove the Amit--Ashurst conjecture for certain metacyclic $p$-groups ~\cite{ashurst}. Recall that Lemma~\ref{lem:intersection-in-Z} completes the idealised inductive proof of the Amit--Ashurst conjecture for finite metacyclic $p$-groups, except for the case when $1\ne G_w\subseteq Z$. To handle this case we consider the following argument.

Let $G$ be a finite nonabelian metacyclic $p$-group. Then from~\cite{King} the group~$G$ can be represented as 
$$G = \langle a, b \mid a^{p^n} = 1, \, b^{p^m} = a^{p^{n- \epsilon}}, \,bab^{-1} = a^r \rangle$$
where $r =p^{n - \delta} + 1$ or $r = p^{n - \delta} -1$ (called positive and negative type respectively) for $n,m\in\mathbb{N}$ and $\epsilon,\delta\in\{0,1,\ldots,n\}$ such that $p^n$ divides both ${p^{n-\epsilon}}(r-1)$ and $r^{p^m}-1$. 
Also, if $b^{p^{n-1}} =1$,  note using $b^{\beta}a^{\alpha} = a^{\alpha r^{\beta}}b^{\beta}$ for $0\le \alpha<p^n$ and $0 \leq \beta < p^m$, we get
\begin{equation}
    \label{eq:useful}
(a^{\alpha}b^{\beta})^{p^{n-1}} = a^{\alpha(1 + r^{\beta} + r^{2\beta} + \cdots + r^{(p^{n-1} - 1)\beta})},
\end{equation}
which will be of later use.

Suppose that a word $w$ maps $G$ to $Z = \langle a^{p^{n-1}} \rangle$, the unique central subgroup of order~$p$ in~$\langle a \rangle$. 
So we have
$$
w (a^{\alpha_1} b^{\beta_1}, \ldots, a^{\alpha_k} b^{\beta_k}) = (a^{p^{n-1}}) ^{f(\boldsymbol{\alpha},\boldsymbol{\beta})}
$$
for some function $f:{\mathbb F}_{p^n}^k \times {\mathbb F}_{p^m}^k \rightarrow {\mathbb F}_p$ where $\boldsymbol{\alpha} = (\alpha_1, \ldots, \alpha_k)$ and $\boldsymbol{\beta}= (\beta_1, \ldots, \beta_k)$ with $0 \leq \alpha_i < p^{n}$ and $0 \leq \beta_i < p^m$.

Suppose for now that we can replace 
$f$ with a polynomial $q: {\mathbb F}_p^k \times {\mathbb F}_p^k \rightarrow {\mathbb F}_p$.

We note the following useful result.

\begin{theorem}[Chevalley-Warning]\cite[Thm.~6.11]{LidlNiederreiter}\label{thm:Chevalley-Warning}
Let $p$ be a prime, let $\ell\in\mathbb{N}$ and let $\widetilde{q}\in\mathbb{F}_p[t_1,\ldots,t_\ell]$ be a polynomial with $d:=\deg(\widetilde{q})<\ell$. If the number $N$ of $(\alpha_1,\ldots,\alpha_\ell)\in\mathbb{F}_p^\ell$ with  $\widetilde{q}(\alpha_1,	\ldots,\alpha_\ell)=0$  satisfies $N\ge 1$, then $N\ge p^{\ell-d}$.
\end{theorem}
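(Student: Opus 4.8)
The statement is Warning's second (quantitative) theorem, and the plan is to first establish the divisibility backbone and then bootstrap it to the sharp count by an induction on the number of variables.

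The backbone is the classical Chevalley--Warning argument. Working in $\mathbb{F}_p$, I would consider the sum
\[
S=\sum_{x\in\mathbb{F}_p^\ell}\bigl(1-\widetilde q(x)^{p-1}\bigr).
\]
By Fermat's little theorem each summand equals $1$ when $\widetilde q(x)=0$ and $0$ otherwise, so $S\equiv N\pmod p$. Expanding $1-\widetilde q^{\,p-1}$ as a polynomial of degree $(p-1)d$ and summing monomial by monomial, I would use that $\sum_{c\in\mathbb{F}_p}c^{\,j}$ equals $-1$ when $(p-1)\mid j$ and $j>0$, and is $0$ otherwise (including $j=0$). Hence a monomial $t_1^{a_1}\cdots t_\ell^{a_\ell}$ can contribute only if every $a_i$ is a positive multiple of $p-1$, which forces total degree at least $\ell(p-1)$. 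Since $(p-1)d<(p-1)\ell$, no monomial survives, so $S=0$ and $p\mid N$. In particular $N\ge 1$ already yields $N\ge p$, which settles the case $d=\ell-1$ where $p^{\ell-d}=p$.

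For the general bound I would induct on $\ell$, assuming now $d\le\ell-2$. Slice $\mathbb{F}_p^\ell$ into the $p$ parallel hyperplanes $H_c=\{t_\ell=c\}$ and let $I=\{c:V\cap H_c\ne\emptyset\}$, where $V$ denotes the zero set. If some $H_c\subseteq V$ then that slice already contributes $p^{\ell-1}\ge p^{\ell-d}$ points and we are done; otherwise, for each $c\in I$ the restriction $\widetilde q(\,\cdot\,,c)$ is a nonzero polynomial in $\ell-1$ variables of some degree $d_c\le d<\ell-1$ having a zero, so the inductive hypothesis gives $|V\cap H_c|\ge p^{(\ell-1)-d_c}$. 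The decisive point is a dichotomy governed by the top-degree form $\widetilde q_d$: writing $\widetilde q=\widetilde q_d+(\text{lower order})$, the degree-$d$ part of $\widetilde q(\,\cdot\,,c)$ is the $t_\ell$-free part of $\widetilde q_d$, which is independent of $c$ and cannot be cancelled by the lower-order terms. Thus either $t_\ell\mid\widetilde q_d$, in which case $d_c\le d-1$ for every $c\in I$ and each nonempty slice contributes at least $p^{(\ell-1)-(d-1)}=p^{\ell-d}$, finishing the step; or $t_\ell\nmid\widetilde q_d$, in which case $d_c=d$ for all $c\in I$ and the crude per-slice bound only yields $p^{\ell-1-d}$.

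This second alternative is the main obstacle, since the projection $I$ of $V$ onto the last coordinate can be far smaller than $p$, so summing $p^{\ell-1-d}$ over $I$ need not reach $p^{\ell-d}$. The first remedy is to slice in a better direction: the degree drops precisely when the chosen linear form divides $\widetilde q_d$, so if $\widetilde q_d$ admits any linear factor $\lambda$ over $\mathbb{F}_p$ I would make a linear change of coordinates sending $\lambda$ to $t_\ell$ and land back in the favourable case. What genuinely remains --- and is the hard kernel encapsulated by the cited result --- is the case in which $\widetilde q_d$ has no linear factor over $\mathbb{F}_p$; there one must argue separately, controlling the size of the projection or invoking Warning's finer counting, to recover the missing factor of $p$.
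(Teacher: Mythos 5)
Your proposal is not a complete proof, and you say so yourself at the end; that admission is the crux of the review. The first half is correct: the sum $S=\sum_{x\in\mathbb{F}_p^\ell}\bigl(1-\widetilde q(x)^{p-1}\bigr)$, the observation that a surviving monomial would need every exponent to be a positive multiple of $p-1$ and hence total degree at least $\ell(p-1)>(p-1)d$, the conclusion $p\mid N$, and the resulting bound $N\ge p$ settling the case $d=\ell-1$. The slicing induction is also sound in the favourable branch, where the top form $\widetilde q_d$ has a linear factor over $\mathbb{F}_p$ and a change of coordinates makes the degree drop on every slice. But the complementary branch is not a marginal leftover --- it is the generic situation. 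Already for $d=2$ one can have $\widetilde q_d=t_1^2+t_2^2$ with $p\equiv 3\pmod 4$, and norm forms give linear-factor-free top forms in every degree; no choice of slicing direction helps, since the degree drops only along directions corresponding to linear factors of $\widetilde q_d$, and passing to an extension field to split $\widetilde q_d$ is unavailable because $N$ counts $\mathbb{F}_p$-points. In that branch your per-slice bound $p^{\ell-1-d}$ yields $p^{\ell-d}$ only if all $p$ hyperplanes $H_c$ meet the zero set, and you have no control on the projection $I$. Since you explicitly leave this case open (``one must argue separately \dots{} invoking Warning's finer counting''), what you have proved is Chevalley--Warning divisibility plus special cases, not the stated quantitative theorem.

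For calibration: the paper itself gives no proof of this statement --- it is quoted verbatim with the citation \cite[Thm.~6.11]{LidlNiederreiter}, and nothing in the paper's arguments depends on how it is proved --- so the relevant benchmark is the textbook proof. Warning's original argument, which Lidl and Niederreiter reproduce, does not induct on hyperplane slices at all: it compares the numbers of zeros lying in parallel affine subspaces (cosets of a fixed subspace of dimension $\ell-d$) via preparatory counting lemmas, and it is exactly this coset-averaging mechanism that recovers the factor of $p$ your projection argument cannot. If you want a self-contained proof to splice in here, you should either carry out that coset argument or simply retain the citation, as the paper does; the hyperplane-slicing route, as it stands, fails on the hard kernel of the theorem.
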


For the purposes of the Amit--Ashurst conjecture, we aim to apply the Chevalley-Warning Theorem to some of the polynomials  
$$
\widetilde{q}_i(t_1,\ldots,t_{2k})=q(t_1,\ldots,t_{2k})-i\qquad \text{for each }i\in\mathbb{F}_p;
$$
in this setting, the number of variables is $\ell=2k$.
In order to 
do so, we will  first need that
${\rm deg}(q) <2k$. Either this holds, or if not we can replace $w$ with 
$\widehat{w}(x_1, \ldots, x_{\mu}) = w(x_1, \ldots, x_k)$ where $\mu > k$. We note that, as a word map, the word $\widehat{w}$ sends $(g_1,\ldots,g_{\mu})$ to $w(g_1,\ldots,g_k)$. Hence  $\widehat{w}$ has the same image as $w$ and
also each element in the image has the same probability of occurring since $x_{k+1}, \ldots, x_{\mu}$ can take any value in $G$.

Next, as we will see below, we need a sharper upper bound on ${\rm deg}(q)$.
Write $c$ for the nilpotency class of~$G$. We suppose for now that $\deg(q)\le c$, and as seen in the previous paragraph we may assume that $2k> c$.

 To summarise, we suppose
$$
w (a^{\alpha_1} b^{\beta_1}, \ldots, a^{\alpha_k} b^{\beta_k}) = (a^{p^{n-1}}) ^{q(\overline{\boldsymbol{\alpha}},\overline{\boldsymbol{\beta}})}
$$
for some polynomial $q: {\mathbb F}_p^k \times {\mathbb F}_p^k \rightarrow {\mathbb F}_p$, where $\overline{\boldsymbol{\alpha}} \equiv \boldsymbol{\alpha} \pmod p$ and $\overline{\boldsymbol{\beta}} \equiv \boldsymbol{\beta} \pmod p$, 
 with $\deg(q)\le c<2k$. Suppose $G_w=\{(a^{p^{n-1}})^i\mid i\in I\}\subseteq Z$, for some subset $I\subseteq \mathbb{F}_p$.
Note that for each $i\in I$, the equation 
 $q(\overline{\boldsymbol{\alpha}},\overline{\boldsymbol{\beta}})=i$ has at least one solution. 
 For each $i\in I$, we set $\widetilde{q}_i(\overline{\boldsymbol{\alpha}},\overline{\boldsymbol{\beta}})=q(\overline{\boldsymbol{\alpha}},\overline{\boldsymbol{\beta}})-i$.
 Consequently, by Theorem~\ref{thm:Chevalley-Warning}, for any $g\in G_w
 $, the set of solutions for $w=g$ (as seen as a word in~$k$ variables) is of size at least $p^{k(n-1)}\cdot p^{k(m-1)}\cdot p^{2k-c}$. Indeed, recalling that
 $$G = \langle a, b \mid a^{p^n} = 1, \, b^{p^m} = a^{p^{n- \epsilon}}, \,bab^{-1} = a^r \rangle,$$
 for each solution $(\overline{\boldsymbol{\alpha}},\overline{\boldsymbol{\beta}})$ of~$\widetilde{q}_i$, where $i\in I$ is such that $g=(a^{p^{n-1}})^i$, we have $p^{k(n-1)}$ choices for $\boldsymbol{\alpha}$ with $\boldsymbol{\alpha} \equiv \overline{\boldsymbol{\alpha}} \pmod p$ and $p^{k(m-1)}$ choices for $\boldsymbol{\beta}$ with $\boldsymbol{\beta} \equiv \overline{\boldsymbol{\beta}} \pmod p$. Hence, 
\[
P_{w,G}(g)\ge \frac{p^{k(n-1)}\cdot p^{k(m-1)}\cdot p^{ 2k-c}}{p^{k(n-1)}\cdot p^{k(m-1)}\cdot p^{2k}}=\frac{1}{p^{c}}>|G|^{-1},
\]
which gives us the Amit--Ashurst bound for the case $G_w\subseteq Z$.

Therefore it remains to determine the instances when we can  replace 
$f(\boldsymbol{\alpha},\boldsymbol{\beta})$ with a polynomial $q: {\mathbb F}_p^k \times {\mathbb F}_p^k \rightarrow {\mathbb F}_p$ of degree at most~$c$.
This will be done in the next section, but we record here some key results for our goal.

Recall from Hall's collecting process \cite[Sec. 11.1]{Hall} 
that
$$w(x_1, \ldots, x_k) = x_1^{e_1} \cdots x_k^{e_k} \kappa(x_1,\ldots,x_k)$$
where $ \kappa(x_1,\ldots,x_k) \in \textbf{F}_k'$ and $e_1,\ldots,e_k\in\mathbb{Z}$. As $w$ maps to $Z$ we have $p^{n-1}$ divides~$e_i$, for $1\le i\le k$. Next, since $G$ is metabelian 
we only need
to consider the basic commutators $\gamma_{\ell}(g_1, \ldots, g_{\ell}) = [g_1, g_2, \ldots, g_{\ell}]$ for $\ell\ge 2$.
Thus we now analyse the commutator structure of~$G$, 
remembering that we define a commutator
$[g,h]$ as $ghg^{-1}h^{-1}$. As any element in~$G$ can be written as $a^{\alpha_i} b^{\beta_j}$ for some
$0 \leq \alpha_i < p^n$ and $0 \leq \beta_j < p^m$, we introduce the following notation: for $\ell\ge 2$,
$$
\gamma_{\ell}(\boldsymbol{\alpha}, \boldsymbol{\beta}) = [a^{\alpha_1}b^{\beta_1}, a^{\alpha_2} b^{\beta_2}, \ldots, a^{\alpha_{\ell}} b^{\beta_{\ell}}]
= a^{q_{\ell}(\boldsymbol{\alpha}, \boldsymbol{\beta})}
$$
for $\boldsymbol{\alpha} = (\alpha_1, \ldots, \alpha_{\ell})$ and $\boldsymbol{\beta}= (\beta_1, \ldots, \beta_{\ell})$, where $q_\ell(\boldsymbol{\alpha},\boldsymbol{\beta})$ is some function in $\boldsymbol{\alpha}$ and $\boldsymbol{\beta}$. 
Note our abuse of notation where if $\boldsymbol{\alpha} = (\alpha_1, \ldots, \alpha_t)$ and $\boldsymbol{\beta} = (\beta_1, \ldots, \beta_t)$ and $\ell <t$ then
$q_\ell(\boldsymbol{\alpha}, \boldsymbol{\beta}) = q_\ell((\alpha_1, \ldots, \alpha_{\ell}), (\beta_1. \ldots, \beta_{\ell}))$.

\begin{lemma} \label{lem:metacyclic} In the set-up above, suppose $G$ is a metacyclic $p$-group with
$$
G = \langle a, b\mid a^{p^n} = 1,\, b^{p^m} = a^{p^{n- \epsilon}},\, bab^{-1} = a^r \rangle.
$$
Then 
$$
q_2(\boldsymbol{\alpha} , \boldsymbol{\beta}) = \alpha_1(1-r^{\beta_2}) - \alpha_2(1-r^{\beta_1})
$$
and
$$
q_{\ell}(\boldsymbol{\alpha}, \boldsymbol{\beta}) = q_2(\boldsymbol{\alpha}, \boldsymbol{\beta})(1-r^{\beta_3}) \cdots (1-r^{\beta_{\ell}})$$
for $\ell \geq 3$.
\end{lemma}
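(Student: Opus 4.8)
The plan is to reduce everything to the single commutation rule $b^{\beta}a^{\alpha}=a^{\alpha r^{\beta}}b^{\beta}$, which is already recorded in the excerpt and follows by iterating the defining relation $bab^{-1}=a^r$. Since $G/\langle a\rangle$ is cyclic, we have $G'\le\langle a\rangle$, so every commutator $\gamma_\ell$ with $\ell\ge 2$ is a power of~$a$; this is precisely what makes $q_\ell$ a well-defined function and what lets conjugation by an arbitrary element $a^{\alpha}b^{\beta}$ act on $\langle a\rangle$ simply by raising to the $r^{\beta}$-th power.

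For the base case $\ell=2$ I would set $g=a^{\alpha_1}b^{\beta_1}$ and $h=a^{\alpha_2}b^{\beta_2}$ and compute directly. Using the commutation rule one finds $gh=a^{\alpha_1+\alpha_2 r^{\beta_1}}b^{\beta_1+\beta_2}$ and $hg=a^{\alpha_2+\alpha_1 r^{\beta_2}}b^{\beta_1+\beta_2}$. Since $[g,h]=ghg^{-1}h^{-1}=(gh)(hg)^{-1}$ and the two $b$-parts carry the same exponent $\beta_1+\beta_2$, they cancel and everything collapses to a power of~$a$, namely
$$[g,h]=a^{(\alpha_1+\alpha_2 r^{\beta_1})-(\alpha_2+\alpha_1 r^{\beta_2})}=a^{\alpha_1(1-r^{\beta_2})-\alpha_2(1-r^{\beta_1})},$$
which gives the claimed expression for $q_2$.

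For $\ell\ge 3$ I would argue by induction, using the left-normed convention $\gamma_\ell=[\gamma_{\ell-1},a^{\alpha_\ell}b^{\beta_\ell}]$. Writing $\gamma_{\ell-1}=a^{s}$ with $s=q_{\ell-1}(\boldsymbol{\alpha},\boldsymbol{\beta})$ and $h=a^{\alpha_\ell}b^{\beta_\ell}$, the key point is that conjugation by~$h$ sends $a$ to $a^{r^{\beta_\ell}}$, so $h a^{-s} h^{-1}=a^{-s r^{\beta_\ell}}$ and hence $[a^{s},h]=a^{s}(h a^{-s}h^{-1})=a^{s(1-r^{\beta_\ell})}$. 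This yields the one-step recursion $q_\ell=q_{\ell-1}(1-r^{\beta_\ell})$, and unwinding down to the base case $\ell=2$ produces the stated product formula.

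There is no genuine obstacle here: once one observes that the $b$-contributions cancel because the commutator already lies in the abelian normal subgroup $\langle a\rangle$, the entire computation is driven by the single relation $bab^{-1}=a^r$. The only points that require care are the bookkeeping of the exponents in the base case and confirming that the left-normed convention indeed presents $\gamma_\ell$ as the commutator of the power $a^{q_{\ell-1}}$ with the next generator, so that the metabelian structure forces conjugation to act by the scalar $r^{\beta_\ell}$.
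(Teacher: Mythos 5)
Your proposal is correct and follows essentially the same route as the paper: the base case is the direct computation from the commutation rule $b^{\beta}a^{\alpha}=a^{\alpha r^{\beta}}b^{\beta}$ (your $(gh)(hg)^{-1}$ bookkeeping is just a tidier packaging of the paper's letter-by-letter collapse), and the inductive step is the identical observation that $\gamma_\ell=[a^{q_{\ell-1}},a^{\alpha_\ell}b^{\beta_\ell}]=a^{q_{\ell-1}(1-r^{\beta_\ell})}$. No gaps; this matches the paper's proof of Lemma~\ref{lem:metacyclic}.
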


\begin{proof}
As $ba = a^rb$ it follows that $b^{\beta} a^{\alpha} = a^{\alpha r^{\beta}}b^{\beta}$. Thus
\begin{eqnarray*}
[a^{\alpha_1}b^{\beta_1}, a^{\alpha_2}b^{\beta_2}] & = & a^{\alpha_1}b^{\beta_1} a^{\alpha_2}b^{\beta_2} b^{-\beta_1}a^{-\alpha_1}{b^{-\beta_2}
a^{-\alpha_2}} \\
& = & a^{\alpha_1} a^{\alpha_2r^{\beta_1}}a^{-\alpha_1r^{\beta_2}}a^{-\alpha_2} \\
& = & a^{\alpha_1(1 - r^{\beta_2}) - \alpha_2(1 - r^{\beta_1})}.
\end{eqnarray*} 
This gives the expression for $q_2(\boldsymbol{\alpha}, \boldsymbol{\beta})$. We then proceed by induction. Suppose $\ell \geq 3$. We have
\begin{align*}
\gamma_\ell(\boldsymbol{\alpha}, \boldsymbol{\beta}) & =  [a^{q_{\ell-1}(\boldsymbol{\alpha}, \boldsymbol{\beta})}, a^{\alpha_\ell} b^{\beta_\ell} ] \\       & =  a^{q_{\ell-1}(\boldsymbol{\alpha}, \boldsymbol{\beta})(1 - r^{\beta_\ell})}.\qedhere
\end{align*}
\end{proof}

We can apply the lemma in various different cases. 

\begin{lemma}\label{Lem:dihedral-quaternion} 
Suppose $G$ is a metacyclic 2-group of one of the following types:
$$
G = \langle a,b \mid a^{2^n} = 1,\, b^{2^m} = a^{2^{n-1}},\, bab^{-1}= a^{-1} \rangle 
$$
or 
$$
G = \langle a,b \mid a^{2^n} = b^{2^m} =1,\, bab^{-1}  = a^{-1}\rangle.
$$
Then for $\ell\ge 2$,
$$
q_{\ell}(\boldsymbol{\alpha} , \boldsymbol{\beta}) = 2^{\ell-1}  \bar{\beta}_{\ell} \cdots \bar{\beta}_3(\alpha_1 \bar{\beta}_2 - \alpha_2 \bar{\beta}_1)
$$
where $\bar{\beta}_i \equiv \beta_i \pmod 2$ and $\bar{\beta}_i\in \mathbb{F}_2$ for $1\le i\le\ell$.
\end{lemma}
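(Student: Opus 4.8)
The plan is to deduce the stated formula directly from Lemma~\ref{lem:metacyclic} by specialising to the defining relation shared by the two listed families. In both presentations the action of $b$ on $a$ is $bab^{-1} = a^{-1}$, so in the notation of the King presentation we have $r \equiv -1 \pmod{2^n}$, equivalently $r^{\beta} \equiv (-1)^{\beta} \pmod{2^n}$ for every integer $\beta$. Observe that the formulas of Lemma~\ref{lem:metacyclic} depend only on $r$ (through the factors $1 - r^{\beta_i}$) and on the exponents $\boldsymbol{\alpha}, \boldsymbol{\beta}$; the relation governing $b^{2^m}$ plays no role in the commutator structure. Hence the only difference between the two families, namely whether $b^{2^m} = a^{2^{n-1}}$ or $b^{2^m} = 1$, is irrelevant here, and both can be treated uniformly via $r \equiv -1$. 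The single computation that drives everything is the evaluation of $1 - r^{\beta_i}$: since $1 - (-1)^{\beta}$ equals $0$ when $\beta$ is even and $2$ when $\beta$ is odd, we have $1 - r^{\beta_i} \equiv 2\bar{\beta}_i \pmod{2^n}$, where $\bar{\beta}_i \equiv \beta_i \pmod 2$ is taken in $\{0,1\}$.

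First I would substitute this evaluation into the base case. By Lemma~\ref{lem:metacyclic},
\[
q_2(\boldsymbol{\alpha}, \boldsymbol{\beta}) = \alpha_1(1 - r^{\beta_2}) - \alpha_2(1 - r^{\beta_1}) \equiv 2\alpha_1 \bar{\beta}_2 - 2\alpha_2 \bar{\beta}_1 = 2(\alpha_1 \bar{\beta}_2 - \alpha_2 \bar{\beta}_1) \pmod{2^n},
\]
which is precisely the claimed expression when $\ell = 2$, the product $\bar{\beta}_\ell \cdots \bar{\beta}_3$ being empty (and hence equal to $1$) in that case.

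Next, for $\ell \geq 3$ I would feed the same evaluation into the product formula $q_\ell(\boldsymbol{\alpha}, \boldsymbol{\beta}) = q_2(\boldsymbol{\alpha}, \boldsymbol{\beta})(1 - r^{\beta_3}) \cdots (1 - r^{\beta_\ell})$. Replacing each of the $\ell - 2$ factors $1 - r^{\beta_i}$ by $2\bar{\beta}_i$ and inserting the base case gives
\[
q_\ell(\boldsymbol{\alpha}, \boldsymbol{\beta}) \equiv 2(\alpha_1 \bar{\beta}_2 - \alpha_2 \bar{\beta}_1)\prod_{i=3}^{\ell} 2\bar{\beta}_i = 2^{\ell-1} \bar{\beta}_\ell \cdots \bar{\beta}_3 (\alpha_1 \bar{\beta}_2 - \alpha_2 \bar{\beta}_1) \pmod{2^n},
\]
as required.

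There is essentially no hard step: the content is the single parity observation $1 - (-1)^{\beta} = 2\bar{\beta}$ together with an accounting of the powers of $2$ produced by the $\ell - 1$ such factors, one arising from $q_2$ and $\ell - 2$ from the product. The only point to keep in mind is that the exponents of $a$ live in $\mathbb{Z}/2^n\mathbb{Z}$, so the identities above are congruences modulo $2^n$ rather than equalities in $\mathbb{Z}$; since the right-hand side is already expressed through the reductions $\bar{\beta}_i$, this causes no difficulty, and the two families are dispatched simultaneously.
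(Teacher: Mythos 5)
Your proposal is correct and matches the paper's own proof: both specialise Lemma~\ref{lem:metacyclic} to $r=-1$, use $1-(-1)^{\beta_i}=2\bar{\beta}_i$ to get the base case $q_2(\boldsymbol{\alpha},\boldsymbol{\beta})=2(\alpha_1\bar{\beta}_2-\alpha_2\bar{\beta}_1)$, and then accumulate a factor $2\bar{\beta}_i$ for each additional entry (the paper phrases this via the one-step recursion $q_\ell=2\bar{\beta}_\ell\, q_{\ell-1}$ rather than your closed product, an immaterial difference). Your added care about the exponents living in $\mathbb{Z}/2^n\mathbb{Z}$ is a harmless refinement the paper leaves implicit.
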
 

\begin{proof}
We apply Lemma~\ref{lem:metacyclic} with $r = -1$ (equivalently $r = 2^n -1$).  So
$$
q_2(\boldsymbol{\alpha}, \boldsymbol{\beta}) = \alpha_1 (1 - (-1)^{\beta_2}) - \alpha_2(1 - (-1)^{\beta_1}) = 
2(\alpha_1 \bar{\beta}_2 - \alpha_2 \bar{\beta}_1),
$$
and 
\[
q_{\ell}(\boldsymbol{\alpha}, \boldsymbol{\beta}) = q_{\ell-1}(\boldsymbol{\alpha}, \boldsymbol{\beta})(1 - (-1)^{\beta_{\ell}}) = 2\bar{\beta}_{\ell}\cdot q_{\ell-1}(\boldsymbol{\alpha}, \boldsymbol{\beta}). \qedhere
\]
\end{proof}

In particular the above lemma can be applied to dihedral groups  (when $b^2 =1$) and generalised
quaternion groups (when $b^2 = a^{2^{n-1}}$);   in these cases
$\bar{\beta_i} = \beta_i$. 

The following lemma considers semidihedral groups.

\begin{lemma} \label{lem:semidihedral}
Let $G = \langle a,b\mid a^{2^{n}} = b^2 =1,\, bab^{-1} = a^{2^{n-1} -1} \rangle$ for $n\ge 2$. Then for $\ell\ge 2$,
$$
q_{\ell}(\boldsymbol{\alpha}, \boldsymbol{\beta}) = 2^{\ell-1} (1 - 2^{n-2})^{\ell-1} \beta_3 \cdots \beta_{\ell}(\alpha_1 \beta_2 - \alpha_2 \beta_1).
$$
\end{lemma}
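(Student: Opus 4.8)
The plan is to derive the formula directly from Lemma~\ref{lem:metacyclic} by specialising to $r = 2^{n-1}-1$, in exactly the same spirit as the proof of Lemma~\ref{Lem:dihedral-quaternion}. The crucial simplification is that here $b^2 = 1$, so $m = 1$ and every exponent $\beta_i$ already lies in $\{0,1\}$; in particular $\beta_i = \bar{\beta}_i$, and there is no need to reduce the $\beta_i$ modulo~$2$ as was done in the dihedral and quaternion cases.

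First I would evaluate the factor $1 - r^{\beta_i}$ that drives the recursion in Lemma~\ref{lem:metacyclic}. Since $r = 2^{n-1}-1$ and $\beta_i\in\{0,1\}$, we have $1 - r^{0} = 0$ and $1 - r^{1} = 2 - 2^{n-1} = 2(1-2^{n-2})$, so that $1 - r^{\beta_i} = 2(1-2^{n-2})\,\beta_i$ for every admissible $\beta_i$. This single identity packages the entire computation, and is the exact analogue of the identity $1-(-1)^{\beta_i}=2\bar{\beta}_i$ used for $r=-1$.

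Substituting this into the expression $q_2(\boldsymbol{\alpha},\boldsymbol{\beta}) = \alpha_1(1-r^{\beta_2}) - \alpha_2(1-r^{\beta_1})$ from Lemma~\ref{lem:metacyclic} immediately gives $q_2(\boldsymbol{\alpha},\boldsymbol{\beta}) = 2(1-2^{n-2})(\alpha_1\beta_2 - \alpha_2\beta_1)$, which is the asserted formula in the base case $\ell = 2$ (the product $\beta_3\cdots\beta_\ell$ being empty). For $\ell\ge 3$ I would feed the same identity into the recursive formula $q_\ell = q_2\,(1-r^{\beta_3})\cdots(1-r^{\beta_\ell})$: each of the $\ell - 2$ factors contributes a $2(1-2^{n-2})\beta_j$, and multiplying these against $q_2$ collects the total power $(2(1-2^{n-2}))^{\ell-1} = 2^{\ell-1}(1-2^{n-2})^{\ell-1}$ together with the monomial $\beta_3\cdots\beta_\ell$, yielding the claimed expression.

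I do not anticipate a genuine obstacle, since once the identity $1 - r^{\beta_i} = 2(1-2^{n-2})\beta_i$ is in hand the result is a routine substitution into Lemma~\ref{lem:metacyclic}. The only points requiring care are bookkeeping ones: confirming that the exponents are tracked consistently modulo $2^n$, which is automatic since the manipulations are already valid at the level of integer exponents, and noting the harmless degenerate case $n = 2$, where $1 - 2^{n-2} = 0$, so that every $q_\ell$ vanishes and the formula correctly records that $G$ is then abelian.
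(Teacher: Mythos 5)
Your proposal is correct and follows essentially the same route as the paper: both specialise Lemma~\ref{lem:metacyclic} to $r = 2^{n-1}-1$, use $\beta_i \in \{0,1\}$ (since $b^2=1$) to get $1 - r^{\beta_i} = (2-2^{n-1})\beta_i = 2(1-2^{n-2})\beta_i$, and substitute this into the formula for $q_2$ and the recursion for $q_\ell$. Your explicit packaging of the key identity and the remark on the degenerate case $n=2$ are harmless elaborations of the paper's two-line computation.
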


\begin{proof} Here we apply Lemma~\ref{lem:metacyclic} with $r = 2^{n-1} -1$, and we obtain
\begin{eqnarray*}
q_2 (\boldsymbol{\alpha}, \boldsymbol{\beta}) & = & \alpha_1(1 - (2^{n-1} - 1)^{\beta_2}) - \alpha_2(1-(2^{n-1} -1)^{\beta_1}) \\ 
 & = & (2 - 2^{n-1})(\alpha_1\beta_2 - \alpha_2 \beta_1)
\end{eqnarray*}
and for $\ell > 2$ 
\begin{align*}
q_{\ell}(\boldsymbol{\alpha}, \boldsymbol{\beta}) & =  q_2(\boldsymbol{\alpha}, \boldsymbol{\beta}) (1- (2^{n-1} - 1)^{\beta_3}) \cdots (1- (2^{n-1} -1)^{\beta_{\ell}})\\
                    & =  (2 - 2^{n-1})^{\ell-1}\beta_3 \cdots \beta_{\ell}(\alpha_1 \beta_2 - \alpha_2 \beta_1). \qedhere
\end{align*}
\end{proof}

Note that in the above result, we have $\beta_\ell\in\mathbb{F}_2$ since $b$ has order~2.

\begin{lemma} \label{lem:class-2} Suppose $G$ is a metacyclic $p$-group with
$$
G = \langle a, b\mid a^{p^n} = 1, \,b^{p^m} = a^{p^{n- \epsilon}}, \,bab^{-1} = a^{1 + p^{n-1}} \rangle.
$$
Then either $\epsilon=0$ 
or $G\cong Q_8$. 

Furthermore,
$$
q_2(\boldsymbol{\alpha}, \boldsymbol{\beta}) = p^{n-1}(\alpha_2 \beta_1 - \alpha_1 \beta_2).
$$
\end{lemma}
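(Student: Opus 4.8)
The plan is to treat the two assertions in turn. The formula for $q_2$ is immediate from Lemma~\ref{lem:metacyclic}: substituting $r = 1 + p^{n-1}$ into $q_2(\boldsymbol{\alpha},\boldsymbol{\beta}) = \alpha_1(1 - r^{\beta_2}) - \alpha_2(1 - r^{\beta_1})$, I would expand $r^{\beta} = (1+p^{n-1})^{\beta}$ by the binomial theorem modulo $p^n$. Since $n \ge 2$ (note $n = 1$ forces $r = 2$, which violates $p^n \mid r^{p^m}-1$), every binomial term $\binom{\beta}{j}p^{(n-1)j}$ with $j \ge 2$ has $p$-adic valuation $(n-1)j \ge 2n-2 \ge n$ and so vanishes; hence $r^{\beta} \equiv 1 + \beta p^{n-1} \pmod{p^n}$ and $1 - r^{\beta_i} \equiv -\beta_i p^{n-1}$. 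Substituting gives $q_2 \equiv p^{n-1}(\alpha_2\beta_1 - \alpha_1\beta_2) \pmod{p^n}$, as claimed.

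For the structural assertion, I would first record that $[b,a] = a^{r-1} = a^{p^{n-1}}$, so $G' = Z = \langle a^{p^{n-1}}\rangle$ is central of order $p$ and $G$ has class $2$; in the notation $r = 1 + p^{n-\delta}$ this is the case $\delta = 1$. Arguing by contradiction, suppose $\epsilon \ge 1$, so that $b^{p^m} = a^{p^{n-\epsilon}}$ satisfies $Z \le \langle b^{p^m}\rangle \le \langle b\rangle$. The key move is the generator change $b \mapsto a^{s}b$, which preserves the action ($a^{s}b\,a\,(a^{s}b)^{-1} = a^{r}$, as $A$ is abelian) and, using $b^{\beta}a^{\alpha} = a^{\alpha r^{\beta}}b^{\beta}$, satisfies $(a^{s}b)^{p^m} = a^{s\sigma + p^{n-\epsilon}}$ with $\sigma = 1 + r + \cdots + r^{p^m - 1}$. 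If the congruence $s\sigma \equiv -p^{n-\epsilon}\pmod{p^n}$ has a solution, then $G$ admits a presentation with the same $n$, $m$, $r$ but with $\epsilon = 0$, contradicting the uniqueness of King's parameters; solvability holds precisely when $v_p(\sigma) \le n - \epsilon$.

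It then remains to compute $\sigma$. The same binomial estimate gives $\sigma \equiv p^m + p^{n-1}\binom{p^m}{2} \pmod{p^n}$. For $p$ odd, and for $p = 2$ with $m \ge 2$, the second summand vanishes, so $v_p(\sigma) = m$; since $\langle a\rangle$ has maximal order among cyclic normal subgroups with cyclic quotient (and here $\langle b\rangle$ is such a subgroup, being normal because $Z \le \langle b\rangle$), we have $m + \epsilon \le n$, whence $v_p(\sigma) = m \le n - \epsilon$ and the reduction forces $\epsilon = 0$. The genuinely different case is $p = 2$, $m = 1$, where $\sigma \equiv 2 + 2^{n-1}\pmod{2^n}$: for $n \ge 3$ one has $v_2(\sigma) = 1 \le n - \epsilon$ and again $\epsilon = 0$, but for $n = 2$ we get $\sigma \equiv 0 \pmod 4$, the congruence is insoluble, and $\epsilon = 1$ cannot be removed; reading off $a^4 = 1$, $b^2 = a^2$, $bab^{-1} = a^{3} = a^{-1}$ identifies $G \cong Q_8$.

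The step I expect to be the crux is the $p = 2$, $m = 1$ valuation computation together with the King-uniqueness bookkeeping: the survival of the sub-leading term $2^{n-1}$ in $\sigma$ is exactly what rescues the reduction for $n \ge 3$ and blocks it for $n = 2$, and one must argue carefully that producing an $\epsilon = 0$ presentation (or, in the borderline $m + \epsilon = n$ situation, the quaternion relations) genuinely contradicts the given canonical form rather than merely yielding an isomorphic one.
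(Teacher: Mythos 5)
Your derivation of $q_2$ is correct and is exactly what the paper intends: it applies Lemma~\ref{lem:metacyclic} with $r=1+p^{n-1}$ and reduces mod $p^n$ via the binomial expansion (the paper compresses this to ``we proceed as above, noting that $a^{p^n}=1$''). For the structural claim, however, the paper does not argue directly at all: it observes that $G$ has class~$2$ and cites the classification in \cite[Thm.~1.1]{beuerle}. Your generator-substitution argument is therefore a genuinely different, self-contained route --- and it is mostly sound --- but as written it has a genuine gap at the step ``$m+\epsilon\le n$''. You justify this inequality by asserting that $\langle a\rangle$ has maximal order among cyclic normal subgroups with cyclic quotient, but no such maximality is part of the hypothesis: neither the presentation nor the divisibility conditions $p^n\mid p^{n-\epsilon}(r-1)$ and $p^n\mid r^{p^m}-1$ force it. Concretely, for $p$ odd take $n=2$, $m=2$, $\epsilon=1$, $r=1+p$: this is a legitimate presentation with $m+\epsilon=3>n=2$, and there $\sigma\equiv p^m\equiv 0\pmod{p^n}$, so your congruence $s\sigma\equiv -p^{n-\epsilon}\pmod{p^n}$ is insoluble and the substitution $b\mapsto a^s b$ cannot split the presentation --- yet the group is split, being isomorphic to $\langle b,c\mid b^{p^3}=c^p=1,\, cbc^{-1}=b^{1+p^2}\rangle$. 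Similarly $p=2$, $n=2$, $m=2$, $\epsilon=1$ yields the modular group of order $16$, not $Q_8$, so your case analysis as stated would terminate incorrectly there.

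The repair is the observation you already have in hand: when $\epsilon\ge 1$ one gets $G'=Z\le\langle b^{p^m}\rangle\le\langle b\rangle$, so $\langle b\rangle$, of order $p^{m+\epsilon}$, is normal with cyclic quotient. If $m+\epsilon>n$, re-present $G$ over $\langle b\rangle$: using $a^{-1}ba=a^{p^{n-\epsilon}-1}\cdot a\,b\cdot$(central corrections), one computes $a^{-1}ba=b^{1+up^{n'-1}}$ with $u$ a unit, $n'=m+\epsilon$, $m'=n-\epsilon$, $\epsilon'=\epsilon$ (after normalising $u$ by replacing $a$ with a suitable power), and now $m'+\epsilon'=n<n'$, so your valuation argument applies to the new presentation and splits it --- except in the terminal case $p=2$, $n'=2$, $m'=1$, which is exactly your $Q_8$ endgame. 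A second, lesser point: your framing via contradiction with ``the uniqueness of King's parameters'' is off, and you rightly flag this yourself at the end. The parameters are not unique (the two presentations of the order-$16$ group above present the same group with $(n,m,\epsilon)=(2,2,1)$ and $(3,1,0)$), and no contradiction is available or needed: the assertion ``either $\epsilon=0$ or $G\cong Q_8$'' must be read up to re-presentation, i.e.\ $G$ admits a presentation of the given shape with $\epsilon=0$ unless $G\cong Q_8$, and that is precisely what your substitution, once supplemented by the role-swap above, establishes directly. With those two corrections your argument gives an elementary replacement for the citation to \cite{beuerle}; without them, the case $m+\epsilon>n$ is simply not covered.
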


\begin{proof} For the first claim, we observe that $G$ is of class 2. The claim now follows from \cite[Thm.~1.1]{beuerle}. For the second claim, we proceed as above, noting  that $a^{p^{n}}=1$.
\end{proof}

\begin{remark}\label{rmk:split}
For a group $G$ as in the above lemma, when $p$ is odd and if $m=1$ (equivalently, if $G$ is a nonabelian $p$-group of order~$p^{n+1}$ with an element $a$ of order~$p^{n}$, for an odd prime $p$), then it follows from \cite[Thm.~7.18]{Roman}
that $G$ is a semidirect product of $\langle a \rangle$ and $\langle b \rangle$ where $o(b) = p$ and $a^b = a^{1+p^{n-1}}$. 
\end{remark}


\section{The Amit--Ashurst Conjecture for finite $p$-groups with cyclic maximal subgroups}

A nonabelian finite $p$-group $G$ has a cyclic maximal subgroup if and only if there are cyclic subgroups $A$ and $B$ with $A\trianglelefteq G$, the product $AB = G$, and $|G:A| = p$. There are 5 such families of $p$-groups 
\cite[Thm.~5.4.4]{Gorenstein}. 

\subsection*{The nonabelian 2-groups with a cyclic maximal subgroup are:}
\begin{enumerate}
    \item the generalised quaternion groups
    $$
    Q_{2^{n+1}} = \langle a, b \mid a^{2^{n}}=1,\, a^{2^{n-1}} = b^2,\, a^b = a^{-1} \rangle,\qquad\text{for $n\ge 2$,}
    $$
    \item the dihedral groups
    $$
   \, D_{2^{n+1}}= \langle a,b \mid a^{2^{n}}= b^2=1, \,a^b = a^{-1}\rangle,\qquad\,\,\,\,\,\qquad\text{for $n\ge 2$,}
    $$
    \item the semidihedral groups
    $$
    SD_{2^{n+1}} = \langle a, b\mid a^{2^{n}}= b^2=1,\, a^{b} = a^{2^{n-1}-1}\rangle,\quad\qquad\text{for $n\ge 3$,}
    $$
    \item the groups of the form
    $$
   \,\qquad G = \langle a, b \mid a^{2^{n}}= b^2=1,\, a^b = a^{2^{n-1}+1}\rangle, \quad\qquad\text{for $n\ge 2$}.
    $$
\end{enumerate}
\subsection*{For an odd prime $p$, the nonabelian $p$-groups with a cyclic maximal subgroup are:}
\begin{enumerate}
    \item[(5)] the groups of the form 
    $$
   \qquad G= \langle a, b \mid a^{p^{n}}= b^{p}=1,\, a^{b} = a^{p^{n-1}+1}\rangle, \!\quad\qquad\text{for $n\ge 2$}.
    $$
\end{enumerate}

In addition to differences between $2$ and odd primes, nonabelian finite $p$-groups with cyclic maximal subgroups can be broadly separated into two categories: those of class 2 and those of maximal class. The dihedral, generalised quaternions, and semidihedral groups have maximal class, while the other nonabelian finite $p$-groups with cyclic maximal subgroups have class~2. We observe that quotienting by the central subgroup of order~$p$ results in either a maximal class subgroup of smaller rank, or an abelian group. This allows us to use Lemma \ref{lem:intersection-in-Z}, assuming that the Amit--Ashurst conjecture holds for the quotient. We note that the Amit--Ashurst conjecture holds for all finite abelian groups.

\begin{proposition}\label{pr:polynomial}
For $p$ a prime, let $G$ be a finite nonabelian $p$-group with a cyclic maximal subgroup. Then, for any word $w$ such that
$
G_w\subseteq Z$, where $Z$ is the minimal central subgroup of order~$p$, we have
 $P_{w,G}(g) \ge |G|^{-1}$ for all $g \in G_w$.
\end{proposition}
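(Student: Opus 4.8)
The plan is to reduce the proposition, family by family through the five types listed in Section~4, to producing a polynomial $q\colon\mathbb{F}_p^k\times\mathbb{F}_p^k\to\mathbb{F}_p$ of degree at most the nilpotency class $c$ with $w(a^{\alpha_1}b^{\beta_1},\ldots,a^{\alpha_k}b^{\beta_k})=(a^{p^{n-1}})^{q(\overline{\boldsymbol{\alpha}},\overline{\boldsymbol{\beta}})}$. Once this is established, the Chevalley--Warning computation of Section~3, together with the padding device that lets us assume $2k>c$, yields $P_{w,G}(g)\ge p^{-c}>|G|^{-1}$, since $|G|=p^{n+1}$ while $c=n$ in the maximal-class families and $c=2\le n$ in the class-$2$ families. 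Throughout I would use that the cyclic maximal subgroup $\langle a\rangle$ has index $p$, so that group elements are written $a^{\alpha}b^{\beta}$ with $\beta\in\{0,\ldots,p-1\}$ a coset representative; hence $\overline{\boldsymbol{\beta}}=\boldsymbol{\beta}$ and only the $\alpha_i$ carry higher $p$-adic information.

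The observation I would isolate first is that the $\langle a\rangle$-exponent $E(\boldsymbol{\alpha},\boldsymbol{\beta})$ of $w(a^{\alpha_1}b^{\beta_1},\ldots)$ is \emph{affine in $\boldsymbol{\alpha}$} modulo $p^n$. Collecting $w=x_1^{e_1}\cdots x_k^{e_k}\kappa$ with $\kappa\in\textbf{F}_k'$, and noting that evaluating $w$ with all arguments but one set to $1$ gives $g^{e_i}$, which must lie in $Z$ for every $g$ and so forces $p^{n-1}\mid e_i$, each power part is $\langle a\rangle$-valued and linear in its $\alpha_i$ by~\eqref{eq:useful}, while each basic commutator contributes the exponent $q_\ell$ of Lemma~\ref{lem:metacyclic}, which is itself linear in $\boldsymbol{\alpha}$; as all factors lie in the abelian group $\langle a\rangle$, their exponents add. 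Writing $E=\sum_i C_i(\boldsymbol{\beta})\,\alpha_i+D(\boldsymbol{\beta})$, the hypothesis $G_w\subseteq Z$ means $a^E\in Z$ for every $\boldsymbol{\alpha}$, and varying a single $\alpha_i$ forces $p^{n-1}\mid C_i(\boldsymbol{\beta})$ for all $\boldsymbol{\beta}$. Hence $f=E/p^{n-1}\bmod p=\sum_i\overline{C_i}(\boldsymbol{\beta})\,\overline{\alpha_i}+\overline{D}(\boldsymbol{\beta})$ depends only on the reductions mod $p$ and is linear in $\overline{\boldsymbol{\alpha}}$; this is exactly the passage from $f$ to $q$ that Section~3 took for granted.

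For the class-$2$ families (4) and (5) the degree count is then immediate: by Lemma~\ref{lem:class-2} the only nontrivial basic commutators are the weight-$2$ ones, whose exponent $q_2=p^{n-1}(\alpha_2\beta_1-\alpha_1\beta_2)$ already carries the factor $p^{n-1}$ and lands in $Z$ as the degree-$2$ form $\overline{\alpha_2}\,\overline{\beta_1}-\overline{\alpha_1}\,\overline{\beta_2}$, while each power part contributes $\overline{\alpha_i}$ linearly, so $\deg q\le 2=c$. The maximal-class families (dihedral, generalised quaternion, semidihedral, all with $p=2$ and $c=n$) are where the genuine work lies, and I expect the degree bound there to be the main obstacle. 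By Lemmas~\ref{Lem:dihedral-quaternion} and~\ref{lem:semidihedral} a weight-$w$ commutator contributes to $C_i(\boldsymbol{\beta})$ a term $2^{w-1}$ times a $\boldsymbol{\beta}$-monomial of degree $w-1$, so in the multilinear expansion of $C_i$ over $\boldsymbol{\beta}\in\{0,1\}^k$ every monomial of degree $d$ has coefficient divisible by $2^{d}$. Hence the $n$-th order mixed finite differences of $C_i$ (in distinct $\beta$-directions) are divisible by $2^{n}$; as the power part depends only on $\beta_i$ and is annihilated by any such difference, dividing by $2^{n-1}$ and reducing mod $2$ shows that $\overline{C_i}=C_i/2^{n-1}\bmod 2$ has vanishing $n$-th differences, i.e.\ degree at most $n-1$. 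Therefore $\deg q\le 1+(n-1)=n=c$, and the Chevalley--Warning count of Section~3 finishes the proof. It only remains to treat the low-rank exceptions where~\eqref{eq:useful} does not directly apply, most notably $Q_8$ (where $b^{2^{n-1}}\ne1$); there the power parts are computed by hand, using that $(a^{\alpha}b)^2=a^2$ is independent of $\alpha$, and the conclusion follows as before.
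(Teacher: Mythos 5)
Your proposal is correct, and the core of it---the degree bound in the maximal-class case---takes a genuinely different route from the paper. The paper also collects $w=x_1^{e_1}\cdots x_k^{e_k}\kappa$, deduces $p^{n-1}\mid e_i$, and feeds in Lemmas~\ref{lem:metacyclic}, \ref{Lem:dihedral-quaternion} and~\ref{lem:semidihedral}, but it then splits $\kappa$ as $\prod_j\Gamma_j(\boldsymbol{\alpha},\boldsymbol{\beta})$ by the number $j$ of active variables, specialises to two variables at a time (the maps $\Lambda_2(i_1,i_2)$), and uses $G_w\subseteq\langle a^{2^{n-1}}\rangle$ to extract congruences $2^{\ell_1-1}\mu_1+\cdots+2^{\ell_d-1}\mu_d\equiv 0\pmod{2^{n-1}}$ on combined coefficients, yielding $\deg\overline{q}_2\le 3$ explicitly and then, by a recursion on $j$ that is only sketched (``repeating this argument recursively''), $\deg\overline{q}_j\le j+1$ for $j<n$. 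You instead work with the total $\langle a\rangle$-exponent $E$, note it is affine in $\boldsymbol{\alpha}$ (exactly the content of Lemma~\ref{lem:metacyclic}), obtain the pointwise divisibility $2^{n-1}\mid C_i(\boldsymbol{\beta})$ by varying a single $\alpha_i$, and then get $\deg\overline{C_i}\le n-1$ from the $2^{d}$-divisibility of degree-$d$ multilinear coefficients of $C_i$ (the factors $2^{\ell-1}$ of Lemmas~\ref{Lem:dihedral-quaternion} and~\ref{lem:semidihedral}, with repeated $\beta$-indices only lowering the reduced degree) combined with the standard finite-difference characterisation of degree for functions $\mathbb{F}_2^k\to\mathbb{F}_2$; this is uniform in the commutator weight, non-recursive, and in effect supplies a full proof of the step the paper leaves as a recursion, while also making explicit the passage from $f$ to a polynomial in $(\overline{\boldsymbol{\alpha}},\overline{\boldsymbol{\beta}})$ that Section~3 assumes. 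Your separate treatment of $Q_8$ is moreover warranted: \eqref{eq:useful} requires $b^{2^{n-1}}=1$, which fails precisely there, and your hand computation $(a^{\alpha}b)^2=a^2$ (so the power parts contribute only low-degree terms, including to the constant $D(\boldsymbol{\beta})$) repairs the displayed case formula at that one group, a point the paper glosses over. What the paper's route buys is completely explicit polynomials $\overline{q}_j$; what yours buys is a shorter, uniform degree count. One harmless imprecision: in type~(4) with $n=2$ a power part contributes $\overline{\alpha}_i(1+\overline{\beta}_i)$, degree $2$ rather than ``linear'', but this still satisfies $\deg q\le 2=c$, so your conclusion stands throughout.
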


\begin{proof}
Suppose $w$ is a word in $k$ variables such that $1\ne G_w\subseteq Z$. Then, using the argument and the notation from the previous section, it suffices to show that
$$
w (a^{\alpha_1} b^{\beta_1}, \ldots, a^{\alpha_k} b^{\beta_k}) = \big(a^{\frac{o(a)}{p}}\big) ^{q(\overline{\boldsymbol{\alpha}},\overline{\boldsymbol{\beta}})}
$$
for some polynomial $q: {\mathbb F}_p^k \times {\mathbb F}_p^k \rightarrow {\mathbb F}_p$ of degree at most the nilpotency class of~$G$, where $\overline{\boldsymbol{\alpha}} \equiv \boldsymbol{\alpha} \pmod p$ and $\overline{\boldsymbol{\beta}} \equiv \boldsymbol{\beta} \pmod p$.

Note from the above classification of finite nonabelian $p$-groups with a cyclic maximal subgroup, we have that either
(1) $G$ is dihedral, (2) $G$ is generalised quaternion, (3) $G$ is semidihedral or 
(4)-and-(5) $G = \langle a,b\mid a^{p^{n}} = b^p= 1, \,bab^{-1} = a^{1 + p^{n-1}}\rangle$.
In all cases $|G| = p^{n+1}$ and we are assuming our word $w$ maps to $\langle a^{p^{n-1}} \rangle$. Recall
$$
w(x_1, \ldots, x_k) = x_1^{e_1} \cdots x_k^{e_k} \cdot \kappa(x_1, \ldots, x_k).
$$
It follows that $p^{n-1}$ divides $e_i$. 
Also using \eqref{eq:useful}, one can check that
\begin{align*}
    &(a^{\alpha_i} b^{\beta_i})^{p^{n-1}}
     &=\begin{cases}
     (a^{\alpha_i})^{2^{n-1}(1-\overline{\beta}_i)} & \text{if $G$ is of type (1), (2) or (3)}\\
    \big((a^{\alpha_i})^{p^{n-1}}\big)^{1+\overline{\beta}_i +2\overline{\beta}_i+\cdots +(p^{n-1}-1)\overline{\beta}_i}& \text{if $G$ is of type (4) or (5)}
    \end{cases}
\end{align*}
 where $\beta_i\equiv \overline{\beta}_i \pmod p$ and $\overline{\beta}_i\in \mathbb{F}_p$. Hence
$$
w(a^{\alpha_1} b^{\beta_1}, \ldots, a^{\alpha_k} b^{\beta_k}) 
= (a^{p^{n-1}})^{\rho(\overline{\boldsymbol{\alpha}}, \overline{\boldsymbol{\beta}}  )} \cdot \kappa(a^{\alpha_1} b^{\beta_1}, \ldots, a^{\alpha_k} b^{\beta_k}),
$$
for some polynomial 
$\rho: {\mathbb F}_p^k \times {\mathbb F}_p^k \rightarrow {\mathbb F}_p$ of degree at most~2. 
 Recall also that the word~$\kappa$ is a product of basic commutators of length at most the nilpotency class of~$G$. Note $\kappa$ is of this form since $G$ is metabelian.
We consider
$$
\kappa(a^{\alpha_1} b^{\beta_1}, \ldots, a^{\alpha_k} b^{\beta_k})  = \prod_{j=2}^{n} \Gamma_j(\boldsymbol{\alpha}, \boldsymbol{\beta}),
$$
where $\Gamma_j(\boldsymbol{\alpha}, \boldsymbol{\beta})$ is the product of all basic commutators in~$j$ variables in the word~$\kappa$.
 Also, by abuse of  notation, we do not distinguish the fact that the effective set of variables for each of the maps $\rho$, $\kappa$ and all $\Gamma_j$ is some subset of the respective given set of variables.

If $G$ is of type (4) or (5),
only $\Gamma_2(\boldsymbol{\alpha}, \boldsymbol{\beta})$ appears and the result follows since $q_2(\boldsymbol{\alpha}, \boldsymbol{\beta} )$ is of the required form by Lemma~\ref{lem:class-2}. 
In the remaining three cases, we have that $G$ is a 2-group. As
$G_w\subseteq\langle a^{2^{n-1}} \rangle$, we deduce that $ \Gamma_2(\boldsymbol{\alpha}, \boldsymbol{\beta})\in\langle a^{2^{n-1}}\rangle$ for all choices of $\boldsymbol{\alpha}, \boldsymbol{\beta}$.
 Indeed, for $\Gamma_2(\boldsymbol{\alpha}, \boldsymbol{\beta})$ to be nontrivial, 
we need some of $\alpha_{i_1}, \alpha_{i_2},\beta_{i_1},\beta_{i_2}$ to be nonzero, for some choice of distinct $i_1,i_2\in\{1,\ldots, k\}$; compare Lemmas~\ref{Lem:dihedral-quaternion} and~\ref{lem:semidihedral}, noting also that we may assume that  $\beta_{i_1},\beta_{i_2}\in\{0,1\}$. For  $1\le i_1<i_2\le k$, let
\[
\Lambda_2(i_1,i_2)=\Gamma_2\big((0,\ldots,0,\alpha_{i_1},0,\ldots,0,\alpha_{i_2},0,\ldots,0),(0,\ldots,0,\beta_{i_1},0,\ldots,0,\beta_{i_2},0,\ldots,0)\big)
\]
and we set
$$\mathbf{I}=\{(i_1,i_2)\mid \Lambda_2(i_1,i_2)\ne 1 \text{ for some }\alpha_{i_1}, \alpha_{i_2},\beta_{i_1},\beta_{i_2}\}.$$
Then 
\[
\Gamma_2(\boldsymbol{\alpha}, \boldsymbol{\beta})=\prod_{(i_1,i_2)\in\mathbf{I}}\Lambda_2(i_1,i_2).
\]For each fixed choice of such $i_1,i_2$ with $\Gamma_2(\boldsymbol{\alpha}, \boldsymbol{\beta})\ne 1$, set $\alpha_i=\beta_i=0$ for all $i\in\{1,\ldots, k\}\backslash\{i_1,i_2\}$. It then follows that 
 $ \Gamma_j(\boldsymbol{\alpha}, \boldsymbol{\beta})=1$ for all $j>2$. Next, suppose that $\Gamma_2(\boldsymbol{\alpha}, \boldsymbol{\beta})$ is a product of commutators of lengths $\ell_1,\ldots,\ell_d$ for some $d\in\mathbb{N}$ with $2\le \ell_1<\cdots<\ell_d\le n$; here $n$ is also the nilpotency class of~$G$.  For $2\le \ell\le n$,  it follows from Lemmas~\ref{Lem:dihedral-quaternion} and~\ref{lem:semidihedral}  that $2^{\ell-1}\mid q_\ell(\boldsymbol{\alpha}, \boldsymbol{\beta})$ for all $\boldsymbol{\alpha}, \boldsymbol{\beta}$. 
 Hence if $ \Lambda_2(i_1,i_2)$ is nontrivial for nonzero $\alpha_{i_1},\beta_{i_2}$ with $\beta_{i_1}$ being zero, we have that
 \[
 \Lambda_2(i_1,i_2)=a^{\alpha_{i_1}
 \big(2^{\ell_1-1}\mu_1+\cdots+ 2^{\ell_d-1}\mu_d\big)}
 \]
 for some $\mu_1,\ldots,\mu_d\in\mathbb{Z}$; here the $\mu_t$ correspond to the  exponent of the commutator $[a^{\alpha_{i_2}}b^{\beta_{i_2}},a^{\alpha_{i_1}}b^{\beta_{i_1}}, a^{\alpha_{i_2}}b^{\beta_{i_2}},\overset{\ell_t-2}\ldots,a^{\alpha_{i_2}}b^{\beta_{i_2}} ]$ in $\Lambda_2(i_1,i_2)$, equivalently in $\Gamma_2(\boldsymbol{\alpha}, \boldsymbol{\beta})$, for $t\in\{1,\ldots, d\}$. The condition $G_w\subseteq\langle a^{2^{n-1}} \rangle$ now implies that
\[
2^{\ell_1-1}\mu_1+\cdots+ 2^{\ell_d-1}\mu_d\equiv 0\pmod {2^{n-1}};
\]
indeed,  let for example $\alpha_{i_1}=1$. If $ \Lambda_2(i_1,i_2)$ is nontrivial for nonzero $\alpha_{i_2},\beta_{i_1}$ with  $\beta_{i_2}$ being zero, then
 \[
  \Lambda_2(i_1,i_2)=a^{-\alpha_{i_2}
 \big(2^{\ell_1-1}\nu_1+\cdots+ 2^{\ell_d-1}\nu_d\big)}
 \]
 for some $\nu_1,\ldots,\nu_d\in\mathbb{Z}$, here the $\nu_t$ correspond to the  exponent of the commutator $[a^{\alpha_{i_1}}b^{\beta_{i_1}},a^{\alpha_{i_2}}b^{\beta_{i_2}}, a^{\alpha_{i_1}}b^{\beta_{i_1}},\overset{\ell_t-2}\ldots,a^{\alpha_{i_1}}b^{\beta_{i_1}} ]$ in $\Lambda_2(i_1,i_2)$. Similarly
 \[
2^{\ell_1-1}\nu_1+\cdots+ 2^{\ell_d-1}\nu_d\equiv 0\pmod {2^{n-1}}.
\]
Finally, if $ \Lambda_2(i_1,i_2)$ is nontrivial for nonzero $\beta_{i_1},\beta_{i_2}$, the condition $G_w\subseteq\langle a^{2^{n-1}} \rangle$, together with the fact that
 \[
 \Lambda_2(i_1,i_2)=a^{(\alpha_{i_1}-\alpha_{i_2})
 \big(2^{\ell_1-1}\lambda_1+\cdots+ 2^{\ell_d-1}\lambda_d\big)}
 \]
 for some $\lambda_1,\ldots,\lambda_d\in\mathbb{Z}$, yields that 
  \[
2^{\ell_1-1}\lambda_1+\cdots+ 2^{\ell_d-1}\lambda_d\equiv 0\pmod {2^{n-1}};
\]
here the $\lambda_t$ correspond to the total exponent of all  commutators of length~$\ell_t$
in $\Lambda_2(i_1,i_2)$.
Therefore  $ \Gamma_2(\boldsymbol{\alpha}, \boldsymbol{\beta})= (a^{2^{n-1}})^{\overline{q}_2(\overline{\boldsymbol{\alpha}}, \overline{\boldsymbol{\beta}} )}$ for some polynomial $\overline{q}_2:{\mathbb F}_p^k \times {\mathbb F}_p^k \rightarrow {\mathbb F}_p$ of degree at most~$3$. Specifically, we have
\begin{align*}
2^{n-1}\cdot\overline{q}_2(\overline{\boldsymbol{\alpha}}, \overline{\boldsymbol{\beta}} )&=2^{n-1}\cdot\overline{q}_2(\overline{\boldsymbol{\alpha}}, \boldsymbol{\beta} )\\
&=\sum_{(i_1,i_2)\in\mathbf{I}}\overline{\alpha}_{i_1}\beta_{i_2}(1-\beta_{i_1})
 \big(2^{\ell_1-1}\mu_1+\cdots+ 2^{\ell_d-1}\mu_d\big)\\
&\quad -\sum_{(i_1,i_2)\in\mathbf{I}}\overline{\alpha}_{i_2}\beta_{i_1}(1-\beta_{i_2})
 \big(2^{\ell_1-1}\nu_1+\cdots+ 2^{\ell_d-1}\nu_d\big)\\
 &\quad +\sum_{(i_1,i_2)\in\mathbf{I}}(\overline{\alpha}_{i_1}-\overline{\alpha}_{i_2})\beta_{i_1}\beta_{i_2}
 \big(2^{\ell_1-1}\lambda_1+\cdots+ 2^{\ell_d-1}\lambda_d\big),
\end{align*}
where the $\mu_t,\nu_t,\lambda_t$ depend on $(i_1,i_2)\in\mathbf{I}$.
Repeating this argument recursively for increasing values of $j\ge 2$, we deduce that
$ \Gamma_j(\boldsymbol{\alpha}, \boldsymbol{\beta})= (a^{2^{n-1}})^{\overline{q}_j(\overline{\boldsymbol{\alpha}}, \overline{\boldsymbol{\beta}} )}$ for some polynomial $\overline{q}_j:{\mathbb F}_p^k \times {\mathbb F}_p^k \rightarrow {\mathbb F}_p$ of degree at most~$j+1$, for all $2\le j< n$. 
Lastly, from Lemmas~\ref{Lem:dihedral-quaternion} and~\ref{lem:semidihedral}, it is clear that $ \Gamma_n(\boldsymbol{\alpha}, \boldsymbol{\beta})= (a^{2^{n-1}})^{\overline{q}_n(\overline{\boldsymbol{\alpha}}, \overline{\boldsymbol{\beta}} )}$ with $\overline{q}_n:{\mathbb F}_p^k \times {\mathbb F}_p^k \rightarrow {\mathbb F}_p$ of degree at most~$n$. 
\end{proof}

\begin{remark}
It is worth pointing out that the condition $1\ne G_w\subseteq Z$ in Proposition~\ref{pr:polynomial} can be replaced with $G_w=Z$. Indeed, if $p=2$ then this is clear since $|Z|=2$. If $p$ is odd, the group~$G=AB$ is of type (5) and hence here $A\cap B=1$; compare Remark~\ref{rmk:split}. Therefore, by the same argument (and using the same notation as) in the proof of Lemma~\ref{lem:intersection-in-Z}, it follows that $w(\mathbf{b})=1$ and hence $Z\le A_{v_{w,\mathbf{b}}}$.
\end{remark}

\begin{theorem}\label{thm:class-2}
For $p$ a prime, let $G$ be a  finite $p$-group  of nilpotency class 2 with a cyclic maximal subgroup. 
 Then $G$ satisfies the Amit--Ashurst conjecture.
\end{theorem}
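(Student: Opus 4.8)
The plan is to assemble the technical results already in hand---Lemma~\ref{lem:dichotomy}, Lemma~\ref{lem:intersection-in-Z}, and Proposition~\ref{pr:polynomial}---against the classification of finite $p$-groups with a cyclic maximal subgroup. First I would isolate which of the five families are of class~$2$: these are exactly the groups of type~(4) and type~(5) together with the quaternion group $Q_8$, the remaining dihedral, generalised quaternion, and semidihedral groups all having maximal class once their order is at least~$16$ (note that $D_8$ already appears as type~(4) with $n=2$). In each relevant case $G = AB$ with $A=\langle a\rangle$ cyclic and normal of index~$p$, and $B=\langle b\rangle$ cyclic with $b^p\in A$: indeed $b^p=1$ for types~(4) and~(5), while $b^2=a^{2^{n-1}}\in A$ for $Q_8$.

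The crucial structural observation I would record next is that quotienting by the minimal central subgroup $Z=\langle a^{p^{n-1}}\rangle$ kills the conjugation action, since the relation $bab^{-1}=a^{1+p^{n-1}}$ reduces to $\bar a^{\bar b}=\bar a$ modulo the order $p^{n-1}$ of $\bar a$ in $G/Z$. Hence $G/Z$ is abelian in every case and so satisfies the Amit--Ashurst conjecture, making the quotient hypothesis of Lemma~\ref{lem:intersection-in-Z} automatic. I would also note that $A\cap B\le Z$ throughout: for types~(4) and~(5) we have $A\cap B=1$, since $|B|=p$ and $b\notin A$, while for $Q_8$ we have $A\cap B=\langle a^{2^{n-1}}\rangle=Z$.

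With these facts in place the argument reduces to a short case analysis. Given a word $w$ and $g\in G_w$, the condition $b^p\in A$ lets us apply Lemma~\ref{lem:dichotomy}, so either $G_w=G$ or $G_w\subseteq A$. If $G_w=G$ then $w$ is surjective and the bound follows from \cite[Lem.~7]{CockeHo2}, as recorded after Lemma~\ref{lem:dichotomy}. If $G_w\subseteq A$, I split according to the position of $G_w$ relative to $Z$: the case $G_w=\{1\}$ is immediate since then $P_{w,G}(1)=1$; the case $1\ne G_w\subseteq Z$ is precisely Proposition~\ref{pr:polynomial}; and the remaining case $G_w\subseteq A$ with $G_w\not\subseteq Z$ is covered by Lemma~\ref{lem:intersection-in-Z}, whose hypotheses we have just verified. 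This exhausts all possibilities and gives $P_{w,G}(g)\ge|G|^{-1}$.

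The genuinely hard work has already been discharged inside Proposition~\ref{pr:polynomial} (through the polynomial encoding and Chevalley--Warning) and Lemma~\ref{lem:intersection-in-Z} (through the homomorphism property of $v_{w,\mathbf b}$ on the abelian group~$A$), so what remains is largely bookkeeping. The one point demanding care---and the only place the argument could go wrong---is the completeness of the case split: I must ensure that the classification is invoked so that every nonabelian class-$2$ group with a cyclic maximal subgroup is captured, in particular that $Q_8$, which lacks the defining shape of types~(4) and~(5), is not omitted, and that $G/Z$ is genuinely abelian in each family so that Lemma~\ref{lem:intersection-in-Z} applies with no further induction.
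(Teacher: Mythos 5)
Your proof is correct and follows essentially the same route as the paper: reduce via the classification to $Q_8$ and types (4)--(5), note $G/Z$ is abelian, and combine Lemma~\ref{lem:dichotomy}, Lemma~\ref{lem:intersection-in-Z} and Proposition~\ref{pr:polynomial} over the trichotomy $G_w=G$, $1\ne G_w\subseteq Z$, and $G_w\subseteq A$ with $G_w\not\subseteq Z$. You are in fact slightly more explicit than the paper, which leaves the verification of $b^p\in A$ and $A\cap B\le Z$ (needed for the two lemmas) implicit.
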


\begin{proof}
We can assume that $G$ is nonabelian and hence $|G| \ge 8$.

By the above classification, we have that $G$ is either the quaternion group~$Q_8$ or of type (4) or (5). 
Since 
$\langle a^{p^{n-1}}\rangle\le Z(G)$ coincides with the  cyclic subgroup~$Z$ of~$A$ of order~$p$, the quotient $G/Z\cong C_{p^{n-1}}\times C_p$ is abelian. 
Now Lemma~\ref{lem:dichotomy}, Lemma~\ref{lem:intersection-in-Z} and  Proposition~\ref{pr:polynomial} show that the Amit--Ashurst conjecture holds for~$G$.
\end{proof}

\begin{theorem}\label{thm:max-class}
 Let $G$ be a maximal class $2$-group. 
 Then $G$ satisfies the Amit--Ashurst conjecture.
\end{theorem}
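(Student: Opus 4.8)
The plan is to argue by induction on $|G|$, assembling the results already proved. Recall that a finite $2$-group of maximal class is exactly a dihedral, generalised quaternion, or semidihedral group, so $G$ falls into family (1), (2) or (3) of the classification above. In each case I write $A = \langle a\rangle$ for the cyclic maximal subgroup, $B = \langle b\rangle$, and $Z = \langle a^{2^{n-1}}\rangle$ for the central subgroup of order~$2$, where $|G| = 2^{n+1}$. Since $|G:A| = 2$, every element of $B$ has its square in $A$, so Lemma~\ref{lem:dichotomy} applies to any word.

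For the base case $n = 2$ the only groups are $D_8$ and $Q_8$, both of class~$2$, and these are already covered by Theorem~\ref{thm:class-2}. So I would assume $n \ge 3$ and that the Amit--Ashurst conjecture holds for every maximal class $2$-group of smaller order. The first thing to check is that in each of the three families the quotient $G/Z$ is isomorphic to the dihedral group $D_{2^n}$, hence is again a maximal class $2$-group of order $2^n < |G|$; the inductive hypothesis then tells us that $G/Z$ satisfies the Amit--Ashurst conjecture. I would also record that $A\cap B \le Z$ in each family: for the dihedral and semidihedral groups $A\cap B = 1$, while for the generalised quaternion groups $A\cap B = \langle b^2\rangle = Z$. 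These two facts are exactly the hypotheses needed to feed into Lemma~\ref{lem:intersection-in-Z}.

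With these in place, I would fix a word $w$ and split into cases using Lemma~\ref{lem:dichotomy}. If $G_w = G$, the bound follows from \cite[Lem.~7]{CockeHo2}, as noted after Lemma~\ref{lem:dichotomy}. If $G_w \subseteq A$ but $G_w \not\subseteq Z$, then Lemma~\ref{lem:intersection-in-Z} applies directly, using $A\cap B \le Z$ and the fact that $G/Z$ satisfies the conjecture, and yields $P_{w,G}(g) \ge |G|^{-1}$ for every $g\in G_w$. The remaining case is $G_w \subseteq Z$: since $1\in G_w$ always and $|Z| = 2$, either $G_w = \{1\}$, in which case $w$ is a law of $G$ and $P_{w,G}(1) = 1$ trivially, or $G_w = Z$, which is precisely the situation handled by Proposition~\ref{pr:polynomial}. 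This exhausts all possibilities and closes the induction.

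The genuinely substantive input is contained in the earlier results --- especially the polynomial reduction together with the Chevalley--Warning estimate behind Proposition~\ref{pr:polynomial}, and the homomorphism-and-fibre analysis in Lemma~\ref{lem:intersection-in-Z} --- so at the level of the theorem the only new work is organising the induction. Accordingly, I expect the main obstacle to be the verification that $G/Z \cong D_{2^n}$ remains a maximal class $2$-group across all three families (collapsing to the abelian base case only when $n = 2$); this is the point on which the whole inductive scheme rests, so I would carry out that quotient computation carefully for each presentation.
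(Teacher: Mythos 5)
Your proposal is correct and takes essentially the same approach as the paper: the paper's own two-sentence proof likewise notes that $G/Z$ is either of class~$2$ or again dihedral and then combines Lemma~\ref{lem:dichotomy}, Lemma~\ref{lem:intersection-in-Z} and Proposition~\ref{pr:polynomial}, with the induction on $|G|$ left implicit. Your explicit checks --- that $G/Z\cong D_{2^n}$ in all three families, that $A\cap B\le Z$ in each case, and that $G_w=\{1\}$ is trivial --- merely unpack details the paper compresses.
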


\begin{proof}
A maximal class $2$-group is contained in one of the following families: the dihedral groups; the generalised quaternion groups; or the semidihedral groups.  Likewise, noting that the quotient $G/Z$ is either of class~2 or isomorphic to a dihedral group, the result follows from  Lemma~\ref{lem:dichotomy}, Lemma~\ref{lem:intersection-in-Z}  and  Proposition~\ref{pr:polynomial}.
\end{proof}

Combining Theorems~\ref{thm:class-2} and \ref{thm:max-class} proves Theorem~\ref{thm:main}.

\end{document}